\newtheorem{theorem}{Theorem}[section]
\newtheorem{lemma}[theorem]{Lemma}
\newtheorem{fact}[theorem]{Fact}
\theoremstyle{definition}
\newtheorem{definition}[theorem]{Definition}
\newtheorem{remark}[theorem]{Remark}
\newtheorem{question}[theorem]{Question}
\newtheorem{conjecture}[theorem]{Conjecture}
\theoremstyle{remark}
\newcommand{\defemph}{\textit}
\newcommand{\dom}{\textrm{Dom}}
\newcommand{\zfc}{\textrm{ZFC}}
\newcommand{\ad}{\textrm{AD}}
\newcommand{\mc}{\mathcal}
\newcommand{\mbb}{\mathbb}
\newcommand{\forces}{\Vdash}
\newcommand{\al}{\alpha}
\newcommand{\om}{\omega}
\newcommand{\sse}{\subseteq}
\DeclareMathOperator{\lh}{lh}
\newcommand{\re}{\restriction}
\newcommand{\bP}{\mathbb{P}}
\newcommand{\bQ}{\mathbb{Q}}
\newcommand{\ra}{\rightarrow}
\newcommand{\lgl}{\langle}
\newcommand{\rgl}{\rangle}
\newcommand{\Erdos}{Erd{\H{o}}s}
\newcommand{\Lauchli}{L{\"{a}}uchli}
\theoremstyle{remark}
\newcommand{\noprint}[1]{\relax}
\title{The Halpern-L\"auchli Theorem at a Measurable Cardinal}
\author{Natasha
  Dobrinen}
\address{Department of Mathematics\\
  University of Denver \\
   2280 S Vine St\\ Denver, CO \ 80208 U.S.A.}  
\email{Natasha.Dobrinen@du.edu}
\urladdr{\url{http://web.cs.du.edu/~ndobrine}} 
\thanks{The first author was partially supported by  National Science Foundation Grant DMS-1301665 and the Isaac Newton Institute.
This research was partially done whilst the first author was a visiting fellow at the Isaac Newton Institute for Mathematical Sciences in the programme `Mathematical, Foundational and Computational Aspects of the Higher Infinite' (HIF) funded by EPSRC grant EP/K032208/1.}
\author{Dan Hathaway}
\address{Mathematics Department\\
University of Denver\\
   2280 S Vine St\\ Denver, CO \ 80208 U.S.A.}  
\email{Daniel.Hathaway@du.edu}
\urladdr{\url{http://mysite.du.edu/~dhathaw2/}}
\begin{document}

\begin{abstract}
Several variants  of the Halpern-\Lauchli\ Theorem  for   trees of uncountable height
are investigated.
For $\kappa$ weakly compact,  we prove that the various statements are all equivalent.
We show that the strong tree version holds for one tree on any infinite cardinal.
For any finite $d\ge 2$,
we prove the consistency of the Halpern-\Lauchli\ Theorem on $d$ many normal
 $\kappa$-trees at a measurable cardinal $\kappa$, 
given the consistency of a  $\kappa+d$-strong cardinal.
This follows from a   more general consistency result at measurable $\kappa$, which
includes the possibility of   infinitely many trees, assuming partition relations which hold in models of AD.
\end{abstract}

\maketitle

\section{Introduction}

Halpern and \Lauchli\ proved their celebrated theorem regarding Ramsey theory on products of trees in \cite{Halpern/Lauchli66} as a necessary step for their  construction in \cite{Halpern/Levy71} of a model of ZF in which the Boolean Prime Ideal Theorem holds but the Axiom of Choice fails.
Since then, many variations have been established and applied.
One of the earliest of these is due to Milliken, who   extended the  Halpern-\Lauchli\ Theorem
to colorings of finite products of strong trees in  \cite{Milliken79}.
As the two versions are equivalent, this version is often used synonymously with Halpern and \Lauchli's orginal version. 
Milliken further proved 
in  \cite{Milliken81} that 
the collection of strong trees 
forms, in modern terminology, a topological Ramsey space.
Further   variations and applications include 
$\om$ many perfect trees in \cite{Laver84};
partitions of products in \cite{DiPrisco/Llopis/Todorcevic04};
the density version in
\cite{Dodos/Kanellopoulos/Tyros13};
the dual version  in \cite{Todorcevic/Tyros13};
 canonical equivalence relations on finite strong trees in \cite{Vlitas14};
and
applications to  colorings of  subsets of the rationals in \cite{DevlinThesis80} and 
 \cite{Vuksanovic03}, and to
finite
Ramsey degrees  of the Rado graph  in \cite{Sauer06} which in turn was applied to show the Rado graph has the rainbow Ramsey property in \cite{Dobrinen/Laflamme/Sauer16}, to name just a few.

Shelah produced the first result  generalizing the Halpern-\Lauchli\ Theorem to an uncountable height tree in 
\cite{Shelah91}.
There, in Lemma 4.1, Shelah 
 proved that for certain models of ZFC with a measurable cardinal $\kappa$, given any finite $m$ and any coloring 
of the $m$-sized subsets of the levels of the tree ${}^{<\kappa}2$, (that is,  a coloring of $\bigcup_{\zeta<\kappa}[{}^{\zeta}2]^m$),
 into less than $\kappa$ many colors,
there is a strong subtree (see Definition \ref{def.strongsubtree})
$T\sse {}^{<\kappa}2$
on which the coloring takes  only finitely many colors.
Shelah's
 proof 
builds on and extends Harrington's forcing argument for the Halpern-\Lauchli\ Theorem on ${}^{<\om}2$,
and  assumes that $\kappa$ is a cardinal which  is  measurable after adding $\lambda$ many Cohen subsets of $\kappa$, where $\lambda$ is large enough that the partition relation $\lambda\ra(\kappa^+)^{2m}_{2^{\kappa}}$ holds.
Thus, his result also holds for any $\kappa$ which  is supercompact after a Laver treatment.

In \cite{Dzamonja/Larson/MitchellQ09}, 
Dzamonja, Larson and Mitchell extended Shelah's proof to include colorings of all antichains in the tree  ${}^{<\kappa}2$ of a fixed finite size $m$, rather than just colorings of subsets of the same levels.
They proved that given a coloring of $m$-sized antichains of the tree ${}^{<\kappa}2$ with less than $\kappa$ many colors,
there is a strong subtree $T$ isomorphic to ${}^{<\kappa}2$ on which the set of $m$-sized antichains takes on only finitely many colors.
These finitely many colors are classified by the embedding types of the trees induced by the antichains.
They then applied this result to  find the Ramsey degrees of $<\kappa$ colorings of $[\mathbb{Q}_{\kappa}]^m$, $m$ any finite integer, and of $<\kappa$ colorings of the copies of a fixed finite graph inside  the  Rado graph on $\kappa$ many vertices in  \cite{Dzamonja/Larson/MitchellQ09}  and \cite{Dzamonja/Larson/MitchellRado09}, respectively.
Here $\mathbb{Q}_\kappa$ is the unique, up to isomorphism, $\kappa$-dense
 linear order of size $\kappa$.
A poset $\langle L, <_L \rangle$ is $\kappa$-dense iff whenever
 $A, B \subseteq L$ are such that $a < b$ for all $a \in A$ and $b \in B$,
 then there is some $c \in L$ such that
 $a < c < b$ for all $a \in A$ and $b \in B$.

This work has left open the following questions. 

\begin{question}\label{q1}
For which uncountable cardinals $\kappa$ can
 the Halpern-\Lauchli\ Theorem hold for  trees of height $\kappa$, either in ZFC or consistently?
\end{question}

\begin{question}\label{q2}
What is the consistency strength of the Halpern-\Lauchli\ Theorem for  $\kappa$ uncountable; in particular for $\kappa$ a measurable cardinal? 
\end{question}

\begin{question}\label{q3}
Given a fixed number of trees, 
are the  weaker (somewhere dense)  and stronger (strong tree) forms of the Halpern-\Lauchli\ Theorem  equivalent, for $\kappa$ uncountable?
\end{question}

In Section \ref{sec.3}, we answer Question \ref{q3} for the case of weakly compact cardinals:  Theorem \ref{thm.SDHLimpliesHL} shows that 
when $\kappa$ is weakly compact, all the various weaker and stronger forms of the Halpern-\Lauchli\ Theorem on $\delta$ many regular $\kappa$ trees (see Section \ref{sec.2}) are equivalent,
where $\delta$ can  be any cardinal less than $\kappa$.
In Fact \ref{fact.1treeZFC}, we show that 
the somewhere dense version for finitary colorings on one tree on any cardinal is a ZFC result.
In fact,
if $\kappa$ is an infinite cardinal, then the strong tree version of the Halpern-\Lauchli\ Theorem holds for finitary colorings of  one regular  tree on $\kappa$, thus 
 answering Question \ref{q1} for the case of  one tree and finitely many colors.

In Section \ref{sec.4}, for any fixed positive integer $d$,
Theorem \ref{thm.HLmbl} shows 
that it is consistent  for the Halpern-\Lauchli\ Theorem to hold on $d$ many regular trees on a  measurable cardinal $\kappa$.
Further, this theorem
yields that a  $\kappa+d$-strong
cardinal is
an upper bound 
 for the consistency strength of the Halpern-\Lauchli\ Theorem for $d$ many  regular trees on a measurable cardinal $\kappa$, 
thus partially answering Questions \ref{q1} and \ref{q2}.
This is weaker than the upper bound of a cardinal $\kappa$ which is  $(\kappa+2d)$-strong, that would be needed if one simply lifted the standard version of  Harrington's forcing proof to a measurable cardinal.
This result also presents an interesting contrast to the $(\kappa +2d+2)$-strong cardinal mentioned as an upper bound for the consistency strength of 
 Lemma 4.1 in \cite{Shelah91}
and its strengthening
Theorem 2.5 in 
 \cite{Dzamonja/Larson/MitchellQ09}, which colors antichains of size $d$ in one tree on $\kappa$.
We conjecture in Section \ref{sec.6} that the consistency strength of the Halpern-Lauchli\ Theorem  on $d$ many regular trees  for $\kappa$ measurable is in fact a $\kappa +d$-strong cardinal.
Theorem \ref{thm.HLmbl} follows from a more general result,  
Theorem \ref{thm.generalHL},
which includes the possibility of infinitely many trees assuming certain partition relations
which hold assuming $\ad$.
Section \ref{sec.6} concludes with open problems and conjectures and their relationships with previous work.

The  authors wish to thank 
James Cummings, Mirna Dzamonja, Jean Larson, Paul Larson, and  Bill Mitchell
 for very helpful conversations.
The first author gratefully acknowledges and is indebted to Richard Laver for outlining Harrington's forcing  proof to her in 2011.


\section{Variants of the Halpern-\Lauchli\  Theorem  and simple  implications}\label{sec.2}

This section contains the relevant definitions, various versions of the Halpern-\Lauchli\ Theorem, and the immediate implications between them.
A tree   $T \subseteq {^{<\kappa}\kappa}$
is a {\em $\kappa$-tree} if $T$ has cardinality $\kappa$ and every level of $T$ has cardinality less than $\kappa$.
$T$ is {\em perfect} if for each node $t$ in $T$ there is an extension of $t$ which splits in $T$.
We shall call a  tree $T \subseteq {^{<\kappa}\kappa}$
  \defemph{regular} if it is a perfect
 $\kappa$-tree in which every maximal branch has
 cofinality $\kappa$.
Given any subset
 $T \subseteq {^{<\kappa} \kappa}$
 and $\zeta < \kappa$,
 we write
 $$T(\zeta) := T \cap {^\zeta \kappa}$$
 for the set of nodes on the $\zeta$-th level of $T$.
Given $t \in T$,
 $$T[t] := \{ s \in T : s \sqsubseteq t \mbox{ or } 
 t \sqsubseteq s \}.$$
In this paper, the successor of a node
 always means immediate successor.

Next, we define the notion of  strong subtree, originally defined by Milliken in \cite{Milliken79}.

\begin{definition}\label{def.strongsubtree}
Let $T \subseteq {^{<\kappa} \kappa}$ be regular.
A tree $T' \subseteq T$
 is a \defemph{strong subtree} of $T$
 as witnessed by some set
 $A \subseteq \kappa$ cofinal in $\kappa$ if and only if
$T'$ is regular and for each $t\in T'(\zeta)$ for $\zeta<\kappa$,
$\zeta\not\in A$ implies that $t$ has a unique successor in $T'$ on level $\zeta+1$,
and $\zeta\in A$ implies every successor of $t$ in $T$ is also in $T'$.
\end{definition}

Given an ordinal $\delta > 0$ and a sequence
 $\langle X_i \subseteq {^{<\kappa}\kappa} :
 i < \delta \rangle$, define
 $$\bigotimes_{i < \delta} X_i :=
 \{ \langle x_i : i < \delta \rangle :
 (\exists \zeta < \kappa)(\forall i < \delta)\,
 x_i \in X_i(\zeta) \},$$
 the \textit{level-product} of the $X_i$'s.
We will call a set of nodes all on the same level a
 \textit{level set}.
Similarly, we will call a sequence of nodes or a sequence of
 sets of nodes a \textit{level sequence}
 if all nodes are on the same level.

The following is  the strong tree  version of the
 Halpern-L\"auchli Theorem, which we shall denote by 
 $\textrm{HL}(\delta,\sigma,\kappa)$.

\begin{definition}
For $\delta, \sigma > 0$ ordinals
 and $\kappa$ an infinite cardinal,
 $\textrm{HL}(\delta,\sigma,\kappa)$
 is the following statement:
Given any sequence
 $\langle T_i \subseteq {^{<\kappa}\kappa} :
 i < \delta \rangle$
 of regular trees and a coloring
 $c:\bigotimes_{i<\delta} T_i\ra\sigma$,
 there exists a sequence of trees
 $\langle T'_i : i < \delta \rangle$
 such that
\begin{enumerate}
\item
 each $T'_i$ is a strong subtree of $T_i$
 as witnessed by the same set $A \subseteq \kappa$
 independent of $i$,
 and
\item
 there is some $\sigma' < \sigma$
  such that for each $\zeta \in A$,
 $c`` \bigotimes_{i<\delta} T'_i(\zeta) = \{\sigma'\}$.
\end{enumerate}
\end{definition}

Given $t \in {^{<\kappa}\kappa}$,
 we define
 $$\textrm{Cone}(t) :=
 \{ s \in {^{<\kappa} \kappa} : s \sqsupseteq t \}.$$
We say that $X \subseteq {^{<\kappa}\kappa}$
 \textit{dominates} $Y \subseteq {^{{<\kappa}}\kappa}$
 if and only if $$(\forall y \in Y)(\exists x \in X)\,
 y \sqsubseteq x.$$
We say $X \subseteq {^{<\kappa}\kappa}$ {\em dominates}
 $y \in {^{<\kappa}\kappa}$ just when
 $X$ dominates $\{y\}$.
We now give the definition of the
 Somewhere-Dense Halpern-L\"auchli Theorem.

\begin{definition}
\label{sdhl_def}
For nonzero ordinals $\delta$ and $\sigma$, and an infinite cardinal $\kappa$,
 $\textrm{SDHL}(\delta,\sigma,\kappa)$
 is the statement that given any sequence
 $\langle T_i \subseteq {^{<\kappa}\kappa} : i < \delta
  \rangle$
 of regular trees
 and any coloring $$c :
 \bigotimes_{i < \delta} T_i
 \to \sigma,$$
 there exist
 $\zeta < \zeta' < \kappa$,
 $\langle t_i \in T_i(\zeta) : i < \delta \rangle$, and
 $\langle X_i \subseteq T_i(\zeta') : i < \delta \rangle$
 such that each $X_i$ dominates
 $T_i(\zeta+1) \cap \textrm{Cone}(t_i)$
 and $$|c`` \bigotimes_{i < \delta} X_i| = 1.$$
\end{definition}


We point out the following simple fact.

\begin{fact}\label{fact.1treeZFC}
For every infinite cardinal $\kappa$ and each positive integer $k$,
SDHL$(1,k,\kappa)$ holds.
\end{fact}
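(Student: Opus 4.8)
The plan is to recast the statement as the existence of a single \emph{good} node and then to exhibit one by a finite descent. Write the tree as $T$ and, for a node $t$ with level $\mathrm{lev}(t)$ and any $\zeta'$ with $\mathrm{lev}(t)<\zeta'<\kappa$, set
\[
  C_t(\zeta'):=c``\bigl(T(\zeta')\cap\Cone(t)\bigr),
\]
the set of colors realized at level $\zeta'$ above $t$; this is a nonempty subset of $k$ because $T$ is regular, so every node has extensions at every higher level below $\kappa$. Since for $\delta=1$ the set $T(\zeta+1)\cap\Cone(t)$ is precisely the set of immediate successors of $t$, a monochromatic $X\subseteq T(\zeta')$ dominating it is exactly a choice of one color-$m$ extension at level $\zeta'$ of each immediate successor $s$ of $t$. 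Thus $\textrm{SDHL}(1,k,\kappa)$ holds for $T$ iff there are $t$, a level $\zeta'>\mathrm{lev}(t)$, and a color $m$ with $m\in\bigcap_s C_s(\zeta')$, the intersection taken over all immediate successors $s$ of $t$; I will call such a $t$ good, and it suffices to find one.

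Suppose toward a contradiction that no node is good, i.e.\ $\bigcap_s C_s(\zeta')=\emptyset$ for every node $t$ and every $\zeta'>\mathrm{lev}(t)$. Define $\mu(t):=\min\{\,|C_t(\zeta')| : \zeta'>\mathrm{lev}(t)\,\}$, a positive integer $\le k$. First, $\mu(t)\ge 2$ for every $t$: if some level gives $C_t(\zeta')=\{m\}$, then each immediate successor $s$ has $\emptyset\ne C_s(\zeta')\subseteq\{m\}$, so $m\in\bigcap_s C_s(\zeta')$ and $t$ is good. Next comes the key step, a descent: fix a node $t$ and a level $\zeta'$ with $|C_t(\zeta')|=\mu(t)$, and write $F:=C_t(\zeta')$. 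Because every extension of $t$ at level $\zeta'$ lies above a unique immediate successor, $F=\bigcup_s C_s(\zeta')$ with each $\emptyset\ne C_s(\zeta')\subseteq F$; as $\bigcap_s C_s(\zeta')=\emptyset$, the sets $C_s(\zeta')$ cannot all equal $F$, so some immediate successor $s$ satisfies $C_s(\zeta')\subsetneq F$ and hence $\mu(s)\le|C_s(\zeta')|<\mu(t)$.

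Iterating this descent from the root produces an infinite chain of nodes along which $\mu$ strictly decreases through positive integers $\ge 2$, which is impossible; this contradiction shows a good node $t$ exists, and reading off one monochromatic extension (at the witnessing level $\zeta'$) of each immediate successor of $t$ supplies the required $\zeta=\mathrm{lev}(t)$, $\zeta'$, $t$, and $X$. I expect the only real subtlety to be that $t$ may have up to $<\kappa$ many immediate successors, so one cannot finish by the naive pigeonhole (two equally colored nodes on a high level) that settles the binary case; the invariant $\mu$ is designed precisely to bypass the branching, since it counts only the colors surviving into a cone and is therefore bounded by $k$ irrespective of how bushy $T$ is. Note finally that the argument uses regularity only to keep the cones nonempty at every level and that it is uniform in $\kappa$, covering singular and regular infinite cardinals alike.
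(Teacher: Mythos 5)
Your strategy---tracking for each node $t$ the set $C_t(\zeta')$ of colors realized in $\Cone(t)$ at level $\zeta'$, and descending to a successor above which strictly fewer colors survive---is genuinely different from the paper's proof, which instead eliminates the colors $0,1,\dots,k-2$ one at a time and finishes by assembling a color-$(k-1)$ monochromatic dominating set two levels above an arbitrary node. Your underlying idea is sound, but the descent step as written has a gap. The inequality $\mu(s)\le|C_s(\zeta')|$ is justified only if $\zeta'$ lies in the index set of the minimum defining $\mu(s)$, i.e.\ only if $\zeta'>\mathrm{lev}(s)=\mathrm{lev}(t)+1$; but $\mu(t)$ minimizes over \emph{all} $\zeta'>\mathrm{lev}(t)$, and nothing rules out that its minimum is attained only at $\zeta'=\mathrm{lev}(t)+1$. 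In that case $C_s(\zeta')$ is undefined by your own definition (under the natural reading it is the singleton $\{c(s)\}$), and your inference would give $\mu(s)\le 1$, contradicting your own correct observation that $\mu(u)\ge 2$ for every node $u$---so the inference cannot be valid there. This case really occurs: in $T={}^{<\kappa}2$ with $k=3$, color $\langle 0\rangle\mapsto 0$ and $\langle 1\rangle\mapsto 1$, and arrange that at every level $\ge 2$ the extensions of $\langle 0\rangle$ realize exactly the colors $\{0,1\}$ and those of $\langle 1\rangle$ realize exactly $\{1,2\}$. Then $\mu(\emptyset)=2$ is attained only at level $1$, while $\mu(\langle 0\rangle)=\mu(\langle 1\rangle)=2$, so no immediate successor of the root has smaller $\mu$ and the descent stalls at the first step. (Here the root is in fact good---color $1$ works at any level $\ge 2$---so this is no counterexample to the Fact; it shows only that a correct proof must handle a case yours skips.)

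The repair is small: descend against one \emph{fixed} level rather than a node-by-node minimum. Fix any node $t_0$, set $\zeta^*:=\mathrm{lev}(t_0)+k<\kappa$, and for $t_0\sqsubseteq u$ with $\mathrm{lev}(u)<\zeta^*$ let $D(u):=C_u(\zeta^*)$, reading $C_s(\zeta^*)=\{c(s)\}$ when $\mathrm{lev}(s)=\zeta^*$. If no node is good then, for every such $u$, the sets $C_s(\zeta^*)$ ($s$ ranging over the immediate successors of $u$) are nonempty, their union is $D(u)$, and their intersection is empty; hence $|D(u)|\ge 2$ and some successor $s$ has $D(s)\subsetneq D(u)$. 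Iterating from $t_0$ yields immediate successors $t_1, t_2,\dots$ with $|D(t_0)|>|D(t_1)|>\cdots$, so $|D(t_{k-1})|\le |D(t_0)|-(k-1)\le 1$, contradicting $|D(t_{k-1})|\ge 2$. Note that only $k-1$ steps are needed and $\mathrm{lev}(t_n)=\mathrm{lev}(t_0)+n<\zeta^*$ throughout, so $\zeta^*$ remains a legitimate level for every node visited; this version also works verbatim for $\kappa=\omega$, where fixes such as ``minimize only over $\zeta'>\mathrm{lev}(t)+\omega$'' would fail. With this change your argument is complete, and it is an attractive alternative to the paper's color-elimination scheme.
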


\begin{proof}
If $k=1$, the result is, so assume $k\ge 2$.
To prove SDHL$(1,k,\kappa)$,
let $T$ be a regular subtree of ${}^{<\kappa}{\kappa}$ and $c$ be a coloring from the nodes in $T$ into $k$.  
If  there exist a   node $t\in T$   and a
level set $X\sse T$ dominating   the successors of $t$ such that every node in $X$ has color $0$, then we are done.
Otherwise, for each node $t\in T$ and each level set $X\sse T$ dominating the successors of $t$,   $c``X\cap\{1,\dots,k-1\}\ne\emptyset$.
If  there exist a   node $t\in T$   and a
level set $X\sse T$ dominating   the successors of $t$ such that every node in $X$ has color $1$, then we are done.
Otherwise, for each node $t\in T$ and each level set $X\sse T$ dominating the successors of $t$,   $c``X\cap\{2,\dots,k-1\}\ne\emptyset$.
Continuing in this manner, either there is a $j<k-1$ and some node $t$ with a level set $X$ dominating the successors of $t$ each with $c$-color $j$,
or else, 
for each node $t\in T$ and each level set $X\sse T$ dominating the successors of $t$,   $c``X\cap\{k-1\}\ne\emptyset$.
In this case,
choose any node $t\in T$ and list the successors of $t$ in $T$ as $\lgl s_i:i<\eta\rgl$, where  $\eta<\kappa$.
For each $i<\eta$, let $Y_i$ denote the set of all 
the successors of $s_i$.
Then $Y_i$ 
  forms a level set dominating the successors of   $s_i$, 
so $c`` Y_i\cap\{k-1\}\ne\emptyset$.
For each $i<\eta$, take one  $t_i\in Y_i$ such that $c(t_i)=k-1$.
Then the set $\{t_i:i<\eta\}$ is a level set dominating every successor of  $t$ and is monochromatic in color $k-1$.
\end{proof}

\begin{definition}
Given a regular tree $T$ and
 a level $\zeta$ less than the height of $T$,
 a set $X \subseteq T$ is $\zeta$-\defemph{dense}
 if and only if $X$ dominates $T(\zeta)$.
Given a sequence of regular trees
 $\langle T_i : i < \delta \rangle$
 and $\vec{x}
 = \langle x_i : i < \delta \rangle
 \in \bigotimes_{i < \delta} T_i$,
 a sequence of sets $X_i \subseteq T_i$ for
 $i < \delta$ is
 $\zeta$-$\vec{x}$-\defemph{dense} if and only if for each $i < \delta$,
 $X_i$ dominates $T_i(\zeta) \cap
 \textrm{Cone}(x_i)$.
\end{definition}

The definition of
 $\textrm{SDHL}(\delta,\sigma,\kappa)$ can be weakened
 by not requiring the sets
 $X_i$ for $i < \delta$ to be level sets; in this case the colorings must color the full
 product $\prod_{i<\delta}T_i$.
Call this weakening
 $\textrm{SDHL}'(\delta,\sigma,\kappa)$.
A \defemph{somewhere dense matrix} for a sequence
 of regular trees $\langle T_i : i < \delta \rangle$
 is a sequence of sets
 $\langle X_i \subseteq T_i : i < \delta \rangle$
 such that there are nodes
 $t_i \in T_i$ for $i < \delta$ all of the same length
 such that each $X_i$ dominates all successors
 of $t_i$ in $T_i$.
Thus, $\textrm{SDHL}'(\delta,\sigma,\kappa)$ is
 the statement that any coloring
 $c : \prod_{i < \delta} T_i \to \sigma$
 is constant on $\prod_{i < \delta} X_i$
 for some somewhere dense matrix
 $\langle X_i \subseteq T_i : i < \delta \rangle$.
Certainly $\textrm{SDHL}(\delta,\sigma,\kappa)$
 implies $\textrm{SDHL}'(\delta,\sigma,\kappa)$.
Our definition of a somewhere dense matrix
 is not exactly the same as in \cite{TodorcevicBK10},
 but both versions yield equivalent definitions of
 $\mbox{SDHL}'(\delta, \sigma, \kappa)$,
 which is the point of the next remark.

\begin{remark}
The $\textrm{SDHL}'(\delta,\sigma,\kappa)$ is in fact equivalent to the statement obtained by making the following
 modification.
The modification is to allow the nodes $t_i$ to come from different levels of the $T_i$,
 while requiring that the supremum $\eta$
 of the length of the members of $\{t_i : i < \delta\}$
 to be strictly less than the length of any member of
 $\bigcup \{X_i : i < \delta\}$,
 and requiring each $X_i$ to dominate
 $\mbox{Cone}(t_i) \cap T(\eta + 1)$ instead of
 the set of successors of $t_i$.
It is routine to check that the statement under this modification
 is equivalent to $\textrm{SDHL}'(\delta,\sigma,\kappa)$.
\end{remark}

By the definitions and previous discussions, the following implications are immediate.

\begin{fact}\label{fact.simpleimplications}
The following statements are arranged from strongest to weakest, where $\delta$ and $\sigma$ are assumed to be strictly less than $\kappa$.
\begin{enumerate}
\item
HL$(\delta,\sigma,\kappa)$;
\item
SDHL$(\delta,\sigma,\kappa)$; 
\item
SDHL$'(\delta,\sigma,\kappa)$; 
\item
SDHL$'(\delta,\sigma,\kappa)$
 where the $t_i$ need not be from the same level.
\end{enumerate}
\end{fact}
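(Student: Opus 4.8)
The plan is to establish the three successive implications (1)$\Rightarrow$(2)$\Rightarrow$(3)$\Rightarrow$(4), of which only the first carries genuine content; the other two are essentially unwindings of the definitions. The implication (2)$\Rightarrow$(3) was already observed in the discussion preceding the Fact, and (3)$\Rightarrow$(4) is immediate, since every somewhere dense matrix whose nodes $t_i$ lie on a common level is in particular a somewhere dense matrix in the broader sense allowing the $t_i$ to lie on different levels; hence any monochromatic matrix witnessing (3) also witnesses (4). For completeness I would rerecord why (2)$\Rightarrow$(3): given a coloring $c:\prod_{i<\delta}T_i\ra\sigma$ of the full product, restrict it to the level-product $\bigotimes_{i<\delta}T_i$ and apply SDHL to obtain $\zeta<\zeta'$, nodes $t_i\in T_i(\zeta)$, and level sets $X_i\sse T_i(\zeta')$ dominating the successors of $t_i$ with $c$ constant on $\bigotimes_{i<\delta}X_i$. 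Since all nodes of each $X_i$ lie on the single level $\zeta'$, the full product $\prod_{i<\delta}X_i$ coincides with the level-product $\bigotimes_{i<\delta}X_i$, and $\lgl X_i:i<\delta\rgl$ together with $\lgl t_i:i<\delta\rgl$ is a somewhere dense matrix; thus (3) holds.

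For (1)$\Rightarrow$(2), the idea is to feed the same coloring into HL and then read the SDHL data off the resulting strong subtrees. So suppose HL$(\delta,\sigma,\kappa)$ holds and fix regular trees $\lgl T_i:i<\delta\rgl$ and a coloring $c:\bigotimes_{i<\delta}T_i\ra\sigma$. HL produces strong subtrees $T'_i\sse T_i$, all witnessed by a common cofinal set $A\sse\kappa$, and a single color $\sigma'<\sigma$ with $c``\bigotimes_{i<\delta}T'_i(\zeta)=\{\sigma'\}$ for every $\zeta\in A$. I would then pick some $\zeta\in A$ together with nodes $t_i\in T'_i(\zeta)$, and let $\zeta'$ be the least element of $A$ exceeding $\zeta$. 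Setting $X_i:=T'_i(\zeta')\cap\Cone(t_i)$, each $X_i$ is a level set contained in $T_i(\zeta')$, and since $X_i\sse T'_i(\zeta')$ the monochromaticity $c``\bigotimes_{i<\delta}X_i=\{\sigma'\}$ is inherited from the corresponding property at the level $\zeta'\in A$, the level-product $\bigotimes_{i<\delta}X_i$ being a nonempty subset of $\bigotimes_{i<\delta}T'_i(\zeta')$.

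The one point requiring care, and the only genuinely non-bookkeeping step, is checking that each $X_i$ dominates $T_i(\zeta+1)\cap\Cone(t_i)$, that is, the immediate successors of $t_i$ taken in the \emph{original} tree $T_i$. Here I would invoke the defining property of a strong subtree at a splitting level: because $\zeta\in A$, every successor of $t_i$ in $T_i$ already belongs to $T'_i$. Regularity of $T'_i$ (perfectness together with every maximal branch having cofinality $\kappa$) then guarantees that each such successor $s$ extends to some node $x\in T'_i(\zeta')$; this $x$ lies in $\Cone(t_i)$ since $t_i\sqsubseteq s\sqsubseteq x$, so $x\in X_i$ witnesses that $X_i$ dominates $s$. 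This completes the verification that $\zeta,\zeta',\lgl t_i:i<\delta\rgl,\lgl X_i:i<\delta\rgl$ witness SDHL$(\delta,\sigma,\kappa)$. The main obstacle throughout is merely keeping straight the distinction between successors computed in $T_i$ and those computed in $T'_i$, a distinction that the choice $\zeta\in A$ is precisely designed to collapse.
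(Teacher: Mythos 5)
Your proposal is correct and takes essentially the same route as the paper, which simply declares these implications immediate from the definitions and the preceding discussion: your arguments for (2)$\Rightarrow$(3) (restricting the full-product coloring and noting $\prod_{i<\delta}X_i=\bigotimes_{i<\delta}X_i$ for level sets) and (3)$\Rightarrow$(4) (a same-level matrix is a special case) are exactly the ones the paper leaves implicit. Your (1)$\Rightarrow$(2) argument---choosing $\zeta\in A$, the next $\zeta'\in A$, and $X_i:=T_i'(\zeta')\cap\Cone(t_i)$, with the strong-subtree condition at $\zeta\in A$ guaranteeing that every $T_i$-successor of $t_i$ lies in $T_i'$ and hence is dominated by $X_i$---is the intended content of ``immediate from the definitions,'' correctly handling the one point (successors in $T_i$ versus $T_i'$) where care is needed.
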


The remark above shows that in fact (4) implies (3).
In the next section will show that
 (3) implies (1).
Thus, all four statements are equivalent.


\section{The various forms of Halpern-\Lauchli\ are equivalent, for $\kappa$ weakly compact}\label{sec.3}

This section provides proofs that all the versions of the Halpern-\Lauchli\ Theorem stated in the previous section are equivalent, provided that $\kappa$ is weakly compact.

First, we prove that if $\kappa$ is weakly compact
 and $\delta, \sigma < \kappa$,
 then $\textrm{SDHL}'(\delta,\sigma,\kappa)$ implies
 $\textrm{SDHL}(\delta,\sigma,\kappa)$.
The proof proceeds via the next three lemmas.
Recall that a cardinal $\kappa$ is {\em weakly compact} if $\kappa$ is strongly inaccessible and satisfies the Tree Property at $\kappa$; that is, every $\kappa$-tree has a cofinal branch.

\begin{lemma}\label{lem.SDHLfromSDHL'1}
Suppose  $\kappa$ is weakly compact.
Let $\delta,\sigma < \kappa$ be  non-zero ordinals, and 
let $\langle T_i \subseteq {^{<\kappa} \kappa} :
 i < \delta \rangle$ be a sequence of regular trees.
Let $U$ be the tree of partial colorings of
 $\prod_{i < \delta} T_i$ into $\sigma$ many colors, 
where a node of $U$
 on level $\alpha$ corresponds to a coloring of
 all tuples $\langle t_i \in T_i : i < \delta \rangle$
 where $\sup \{ \lh(t_i) : i < \delta \} < \alpha$.

Suppose $Z \subseteq U$ has size $\kappa$
 and each $z \in Z$ corresponds to a coloring
 $c_z$ which  is not monochromatic on any 
somewhere dense  matrix  contained in its domain.
Then there is a coloring $c$ of all of
 $\prod_{i < \delta} T_i$ such that 
$c$ is not monochromatic on any 
 somewhere dense matrix.
\end{lemma}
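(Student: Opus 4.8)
The plan is to exploit the Tree Property at the weakly compact cardinal $\kappa$ to extract from $Z$ a cofinal branch through $U$, whose union will be the desired total coloring $c$. The first step is to observe that the relevant property is \emph{hereditary under restriction}: if $z \in U$ lies on level $\alpha$ and $c_z$ is not monochromatic on any somewhere dense matrix contained in its domain, then the same holds of $z \restriction \beta$ for every $\beta \le \alpha$, since a somewhere dense matrix contained in the domain of $z \restriction \beta$ is also one contained in the domain of $z$, on which the two colorings agree. Consequently the downward closure $Z'$ of $Z$ in $U$ is a subtree of $U$, every node of which inherits the good property.

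Next I would verify that $Z'$ is a $\kappa$-tree, so that the Tree Property applies. Writing $D_\alpha$ for the domain of a node on level $\alpha$, namely the set of tuples $\langle t_i : i < \delta \rangle$ with $\sup_{i<\delta}\lh(t_i) < \alpha$, the strong inaccessibility of $\kappa$ together with $\delta, \sigma < \kappa$ yields $|D_\alpha| < \kappa$ and hence $|U(\alpha)| = \sigma^{|D_\alpha|} < \kappa$; thus each level of $Z' \sse U$ has size $<\kappa$. Since $|Z| = \kappa$ while each level of $U$ is small and $\kappa$ is regular, $Z$ cannot be confined to a bounded set of levels, so $Z$, and therefore $Z'$, meets every level below $\kappa$, making $Z'$ a $\kappa$-tree. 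By weak compactness $Z'$ has a cofinal branch $b$, and I would set $c := \bigcup b$, which is a total coloring of $\prod_{i<\delta} T_i$ because $b$ is cofinal and every tuple lies in some $D_\alpha$ (here $\delta < \kappa$ and $\kappa$ is regular).

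It remains to show $c$ is not monochromatic on any somewhere dense matrix, and this localization is where the real work lies. Suppose toward a contradiction that $c$ is constant on $\prod_{i<\delta} X_i$ for some somewhere dense matrix $\langle X_i : i < \delta \rangle$ witnessed by nodes $t_i$. The $X_i$ may contain nodes of unbounded length, so I would first thin them: for each of the $<\kappa$ successors $s$ of $t_i$ choose a single $x^i_s \in X_i$ dominating $s$, and set $X_i' := \{ x^i_s : s \}$. Then $\langle X_i' : i < \delta \rangle$ is still a somewhere dense matrix witnessed by the same $t_i$, $c$ is still constant on $\prod_{i<\delta} X_i' \sse \prod_{i<\delta} X_i$, and since $|X_i'| < \kappa$, $\delta < \kappa$, and $\kappa$ is regular, all nodes occurring in the $X_i'$ have length below some $\eta < \kappa$. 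Hence $\prod_{i<\delta} X_i' \sse D_{\eta+1}$, so taking the node $b_\gamma$ of $b$ on any level $\gamma \ge \eta+1$ gives $c \restriction D_\gamma = b_\gamma$, whence $b_\gamma$ would be monochromatic on the somewhere dense matrix $\langle X_i' : i < \delta \rangle$ contained in its domain, contradicting the good property inherited by every node of $Z'$. The main obstacle is precisely this thinning argument: one must replace an arbitrary, possibly length-unbounded, monochromatic matrix by a bounded sub-matrix that is still somewhere dense, so that any violation for $c$ reflects down to a single node of the branch.
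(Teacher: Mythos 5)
Your proof is correct and follows essentially the same route as the paper's: apply the tree property to the downward closure of $Z$ (the paper's tree $S$ of predecessors of elements of $Z$), take a cofinal branch through it, let $c$ be the union of that branch, and reflect any monochromatic somewhere dense matrix down to a node of the branch, contradicting the hypothesis on the colorings coming from $Z$. Your explicit thinning step---replacing an arbitrary, possibly length-unbounded, monochromatic matrix by a bounded somewhere dense sub-matrix of size $<\kappa$---spells out a reduction that the paper leaves implicit, since its verification is only carried out for matrices whose nodes have lengths below some fixed $\alpha<\kappa$.
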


\begin{proof}
Note that if $\langle u_\alpha : \alpha < \beta \rangle$
 is a strictly increasing sequence of nodes of $U$
 for some limit ordinal $\beta$,
 then there is a unique node on level
 $\sup_{\alpha < \beta} \lh(u_\alpha)$
 that extends each $u_\alpha$.
Furthermore, note that $U$ is a $\kappa$ tree.
Fix a set $Z \subseteq U$ as in the hypotheses.

Let $S$ be the set of all predecessors of
 elements of $Z$.
Then  $S$ is a $\kappa$-tree.
By the weak compactness of $\kappa$, 
 fix a length $\kappa$ branch through $S$.
This branch corresponds to a coloring $c$
 of all of $\prod_{i<\delta} T_i$.
Now, for each $\alpha < \kappa$ there is a node
 $z \in Z$ such that the
domain of $c_z$ includes
 $ \prod_{i<\delta}\bigcup_{\zeta<\al}T_i(\zeta)$
 and both $c$ and $c_z$
 color those sequences the same way.
For each $\al<\kappa$, choose one such node and label it  $z_{\al}$.

Now, pick any somewhere dense
 $\langle X_i \subseteq T_i : i < \delta \rangle$
 such that there is some $\alpha < \kappa$ satisfying
 $(\forall i < \delta)
  (\forall x \in X_i)\, \lh(x) < \alpha$.
Fix such an $\alpha < \kappa$.
We must show that
 $|c``\prod_{i<\delta}X_i| > 1$.
We have that for all $\langle x_i \in X_i : i < \delta \rangle$,
 $$c(\langle x_i : i < \delta \rangle) =
 c_{z_\al}(\langle x_i : i < \delta \rangle).$$
By the hypothesis on $c_{z_\al}$, we have
 $|c_{z_\al}`` \prod_{i < \delta} X_i| > 1$.
Thus, $|c`` \prod_{i < \delta} X_i| > 1$ as desired.
\end{proof}

\begin{lemma}\label{lem.SDHLfromSDHL'2}
Suppose $\kappa$ is weakly compact.
Let $\sigma, \delta > 0$ be ordinals and assume
 $\delta, \sigma < \kappa$.
Assume $\textrm{SDHL}'(\delta,\sigma,\kappa)$ holds.
Let $\langle T_i \subseteq {^{<\kappa}\kappa}
 : i < \delta \rangle$ be a sequence of regular trees.
Then there is a level $1\le \zeta < \kappa$ such that for every coloring
 $$c : \prod_{i < \delta}
 \bigcup_{\xi < \zeta} T_i(\xi)
 \to \sigma,$$
 there is a somewhere dense matrix
 $\langle X_i \subseteq
 \bigcup_{\xi < \zeta} T_i(\xi) : i < \delta \rangle$
 such that $c$ is constant on
 $\prod_{i < \delta} X_i$.
\end{lemma}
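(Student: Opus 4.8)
The plan is to argue by contradiction, reducing to Lemma~\ref{lem.SDHLfromSDHL'1} and then invoking the hypothesis $\textrm{SDHL}'(\delta,\sigma,\kappa)$. So I would suppose the conclusion fails, which means that for every ordinal $\zeta$ with $1\le\zeta<\kappa$ there is a coloring $c_\zeta:\prod_{i<\delta}\bigcup_{\xi<\zeta}T_i(\xi)\to\sigma$ that is \emph{not} constant on $\prod_{i<\delta}X_i$ for any somewhere dense matrix $\langle X_i\subseteq\bigcup_{\xi<\zeta}T_i(\xi):i<\delta\rangle$.

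Next I would identify each $c_\zeta$ with a node of the tree $U$ of partial colorings. Since $c_\zeta$ colors exactly those tuples $\langle t_i:i<\delta\rangle$ with $\sup_{i<\delta}\lh(t_i)<\zeta$, it corresponds to a node $z_\zeta\in U$ on level $\zeta$. Setting $Z:=\{z_\zeta:1\le\zeta<\kappa\}$, distinct $\zeta$ give nodes on distinct levels, so the $z_\zeta$ are pairwise distinct and $|Z|=\kappa$. The failure hypothesis says precisely that each associated coloring $c_{z_\zeta}=c_\zeta$ is not monochromatic on any somewhere dense matrix contained in its domain, which is exactly the hypothesis that Lemma~\ref{lem.SDHLfromSDHL'1} requires of $Z$.

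I would then apply Lemma~\ref{lem.SDHLfromSDHL'1} to $Z$ to obtain a coloring $c$ of all of $\prod_{i<\delta}T_i$ which is not monochromatic on any somewhere dense matrix. This directly contradicts $\textrm{SDHL}'(\delta,\sigma,\kappa)$, which asserts that every coloring $c:\prod_{i<\delta}T_i\to\sigma$ is constant on $\prod_{i<\delta}X_i$ for some somewhere dense matrix. This contradiction yields the lemma.

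The main obstacle is purely bookkeeping rather than conceptual: one must verify that the negation of the conclusion lines up exactly with the badness hypothesis of Lemma~\ref{lem.SDHLfromSDHL'1}, i.e.\ that ``$c_\zeta$ admits no monochromatic somewhere dense matrix lying below level $\zeta$'' coincides with ``$c_\zeta$ is not monochromatic on any somewhere dense matrix contained in its domain $\prod_{i<\delta}\bigcup_{\xi<\zeta}T_i(\xi)$'', and that the chosen $z_\zeta$ constitute a subset of $U$ of size $\kappa$. Both are routine once the identification of colorings of bounded initial segments with nodes of $U$ is made explicit; notably, no further appeal to weak compactness is needed here beyond what Lemma~\ref{lem.SDHLfromSDHL'1} already supplies.
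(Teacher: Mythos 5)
Your proposal is correct and takes essentially the same approach as the paper's proof: argue by contradiction, pick for each level $\zeta<\kappa$ a coloring $c_\zeta$ admitting no monochromatic somewhere dense matrix in its domain, view these as a size-$\kappa$ subset $Z$ of the tree $U$ of partial colorings, apply Lemma~\ref{lem.SDHLfromSDHL'1} to produce a total coloring with no monochromatic somewhere dense matrix, and thereby contradict $\textrm{SDHL}'(\delta,\sigma,\kappa)$. The bookkeeping you highlight (identifying each $c_\zeta$ with a node of $U$ and noting the $z_\zeta$ are distinct) is exactly what the paper leaves implicit.
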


\begin{proof}
Assume there is no such level $\zeta$.
Then for each $\zeta < \kappa$ we may pick a coloring
 $c_\zeta : \prod_{i < \delta}
 \bigcup_{\xi < \zeta} T_i(\xi) \to \sigma$ 
which is not constant on any
 somewhere dense matrix in the domain of $c_\zeta$.
Letting $U$ be the tree of partial colorings
 described in Lemma \ref{lem.SDHLfromSDHL'1}, the  set
 $Z =\{c_\zeta:\zeta<\kappa\}$ is a subset of $ U$ of size $\kappa$.
Applying Lemma \ref{lem.SDHLfromSDHL'1}, 
 we obtain a coloring
 $c : \prod_{i < \delta} T_i \to \sigma$
 for which there is no somewhere dense matrix
 $\langle X_i \subseteq T_i : i < \delta \rangle$
 such that $|c``\prod_{i<\delta} X_i| = 1$.
Hence, $\textrm{SDHL}'(\delta,\sigma,\kappa)$
 fails, which is a contradiction.
\end{proof}

The following bounded version is the analogue of the Finite Halpern-\Lauchli\ Theorem for finitely many trees on $\om$ (see, for instance, Theorem 3.9 in \cite{TodorcevicBK10}).

\begin{lemma}\label{lem.SDHLfromSDHL'3}
\label{sdhl_holding_lemma}
Suppose $\kappa$ is weakly compact.
Let $\sigma, \delta <\kappa$ be nonzero ordinals and 
assume $\textrm{SDHL}'(\delta,\sigma,\kappa)$ holds.
Let $\langle T_i \subseteq {^{<\kappa}\kappa}
 : i < \delta \rangle$ be a sequence of regular trees.
Then there is a level $1\le \zeta < \kappa$ such that  for every coloring
 $$\bar{c} : \bigotimes_{i < \delta}
 T_i(\zeta) \to \sigma,$$
 there is a somewhere dense matrix
 $\langle Y_i \subseteq T_i(\zeta) : i < \delta \rangle$
 such that $\bar{c}$ is constant on
 $\bigotimes_{i < \delta} Y_i$.
\end{lemma}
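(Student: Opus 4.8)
The plan is to derive Lemma \ref{lem.SDHLfromSDHL'3} from the ``full-product'' version already established in Lemma \ref{lem.SDHLfromSDHL'2}. First I would invoke Lemma \ref{lem.SDHLfromSDHL'2} to obtain a single level $\zeta$ that works simultaneously for all full-product colorings of $\prod_{i<\delta}\bigcup_{\xi<\zeta}T_i(\xi)$; the claim is that this very same $\zeta$ works for the single-level product $\bigotimes_{i<\delta}T_i(\zeta)$. The key point is that a coloring $\bar c$ of the level-product $\bigotimes_{i<\delta}T_i(\zeta)$ is a \emph{special case} of a full-product coloring: we extend $\bar c$ to a coloring $c$ of all of $\prod_{i<\delta}\bigcup_{\xi<\zeta}T_i(\xi)$ by, say, sending every tuple that is not a level sequence on level $\zeta$ to some fixed default color and agreeing with $\bar c$ on the level-$\zeta$ tuples. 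Lemma \ref{lem.SDHLfromSDHL'2} then hands us a somewhere dense matrix $\langle X_i\subseteq\bigcup_{\xi<\zeta}T_i(\xi):i<\delta\rangle$ on which $c$ is constant.

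The main obstacle, and the real content of the argument, is that Lemma \ref{lem.SDHLfromSDHL'2} produces an arbitrary somewhere dense matrix whose nodes may lie on various levels below $\zeta$, whereas Lemma \ref{lem.SDHLfromSDHL'3} demands a matrix $\langle Y_i\rangle$ living entirely on level $\zeta$ itself and monochromatic for the \emph{level}-product coloring $\bar c$. I need to convert the $X_i$ into level sets on level $\zeta$ without destroying the somewhere dense property or the constant color. To handle this, I would first argue that the constant color witnessed by $X_i$ cannot be the default color: a somewhere dense matrix always contains a level sequence on its top level in its product (pick witnessing nodes $t_i$ of common length and then, since each $X_i$ dominates the successors of $t_i$, one can select $x_i\in X_i$ extending a common-level successor, producing a genuine level sequence on which $c=\bar c$), so constancy forces the color to be a $\bar c$-value, not the default. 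Then I would push each node of $X_i$ up to level $\zeta$: replace $X_i$ by the set $Y_i$ of all nodes in $T_i(\zeta)$ extending some member of $X_i$.

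The verification then splits into two routine checks. First, $Y_i$ is by construction a level set contained in $T_i(\zeta)$, and $\langle Y_i:i<\delta\rangle$ remains a somewhere dense matrix: using the same witnessing nodes $t_i$, since $X_i$ dominates the successors of $t_i$ and $Y_i$ dominates $X_i$, transitivity of $\sqsubseteq$ gives that $Y_i$ dominates the successors of $t_i$. Second, $\bar c$ is constant on $\bigotimes_{i<\delta}Y_i$ with the same value: every level sequence $\langle y_i:i<\delta\rangle\in\bigotimes_{i<\delta}Y_i$ has each $y_i$ extending some $x_i\in X_i$, so $\langle x_i:i<\delta\rangle\in\prod_{i<\delta}X_i$ and $\langle y_i\rangle$ is itself a level sequence on level $\zeta$; since $c$ agrees with $\bar c$ on level-$\zeta$ sequences and is constant on $\prod_{i<\delta}X_i$ with the (non-default) $\bar c$-value, all such $\langle y_i\rangle$ receive that single color under $\bar c$.

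The one subtlety worth flagging is whether the $x_i$ chosen above genuinely lie below level $\zeta$ so that they admit extensions in $T_i(\zeta)$; this is guaranteed because $X_i\subseteq\bigcup_{\xi<\zeta}T_i(\xi)$ and the trees are regular, so every node has extensions at every higher level below $\kappa$, and $\zeta<\kappa$. With these observations in place the lemma follows, so I expect the write-up to be short, the only care being the color-matching bookkeeping between $c$, $\bar c$, and the default color.
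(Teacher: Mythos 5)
Your overall skeleton (apply Lemma \ref{lem.SDHLfromSDHL'2}, then push the resulting matrix up to level $\zeta$) matches the paper's, and your domination-transfer check is fine; but the central device is wrong, and the argument as written fails. The domain of a full-product coloring in Lemma \ref{lem.SDHLfromSDHL'2} is $\prod_{i<\delta}\bigcup_{\xi<\zeta}T_i(\xi)$, whose coordinates lie on levels \emph{strictly below} $\zeta$; it contains no level-$\zeta$ tuples at all. Hence your ``extension of $\bar c$ by a default color'' is simply the constant default coloring: it agrees with $\bar c$ nowhere on its domain, Lemma \ref{lem.SDHLfromSDHL'2} applied to it returns a matrix monochromatic in the default color, and nothing about $\bar c$ can be extracted. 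Your non-default-color argument breaks for the same reason: the nodes you select from the $X_i$ lie on various levels below $\zeta$ (they need not even have a common length, since the $X_i$ are not level sets), so they never form a level-$\zeta$ tuple ``on which $c=\bar c$''. And even if one repaired the domain issue, constancy of $c$ on $\prod_{i<\delta}X_i$ would still say nothing about $\bar c$ on $\prod_{i<\delta}Y_i$, because the tuples $\langle y_i : i<\delta\rangle$ are not elements of $\prod_{i<\delta}X_i$, and a default-color extension creates no link between the color of a low tuple and the colors of its lifts.

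The missing idea is to make the full-product coloring a \emph{pullback} of $\bar c$ rather than an extension of it. Before invoking Lemma \ref{lem.SDHLfromSDHL'2}, fix for each $i<\delta$ a lifting map $f_i$ sending each $t\in\bigcup_{\xi<\zeta}T_i(\xi)$ to some $f_i(t)\in T_i(\zeta)$ with $f_i(t)\sqsupseteq t$ (regularity guarantees such extensions exist, as you noted), and define $c(\langle t_i : i<\delta\rangle):=\bar c(\langle f_i(t_i) : i<\delta\rangle)$. Now the $c$-color of every low tuple is, by definition, the $\bar c$-color of a canonical lift. Given the monochromatic somewhere dense matrix $\langle X_i : i<\delta\rangle$ for $c$, set $Y_i:=f_i[X_i]$ --- the images under the fixed liftings, \emph{not} the set of all level-$\zeta$ extensions of members of $X_i$. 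That is a second error in your proposal: with all extensions, a tuple $\langle y_i : i<\delta\rangle$ with $y_i\sqsupseteq x_i$ but $y_i\neq f_i(x_i)$ is constrained by nothing, even under the pullback coloring. With $Y_i=f_i[X_i]$, every $\langle y_i : i<\delta\rangle\in\prod_{i<\delta}Y_i$ equals $\langle f_i(x_i) : i<\delta\rangle$ for some $\langle x_i : i<\delta\rangle\in\prod_{i<\delta}X_i$, so $\bar c(\langle y_i : i<\delta\rangle)=c(\langle x_i : i<\delta\rangle)$ is the constant color, and the domination transfer you already wrote goes through verbatim. This pullback-plus-images argument is exactly the paper's proof.
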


\begin{proof}
Fix some  $\zeta < \kappa$ for which the  Lemma  \ref{lem.SDHLfromSDHL'2} holds.
For each $i < \delta$ and each node
 $t \in
 \bigcup_{\xi < \zeta} T_i(\xi)$,
 associate a node $f_i(t) \in T_i(\zeta)$
 such that $f_i(t) \sqsupseteq t$.
Now fix a coloring $\bar{c} : \bigotimes_{i < \delta}
 T_i(\zeta) \to \sigma$.
Let $c : \prod_{i < \delta}
 \bigcup_{\xi < \zeta} T_i(\xi) \to \sigma$
 be defined by $$c( \langle t_i : i < \delta \rangle )
 := \bar{c}(\langle f_i(t_i) : i < \delta \rangle).$$
By the property of $\zeta$,
 there is a somewhere dense matrix
 $\langle X_i \subseteq
 \bigcup_{\xi < \zeta} T_i(\xi)
 : i < \delta \rangle$
 such that $|c``\prod_{i<\delta}X_i| = 1$.
For each $i < \delta$, let
 $$Y_i := \{ f_i(t) : t \in X_i \}.$$
The sequence
 $\langle Y_i \subseteq T_i(\zeta) : i < \delta \rangle$
 is a somewhere dense matrix,
 and by the definition of $c$ we have
 $|\bar{c}``\bigotimes_{i<\delta}Y_i| = 1$.
\end{proof}

\begin{remark}
We point out that the set of $\zeta<\kappa$ satisfying Lemma \ref{lem.SDHLfromSDHL'2}  is closed upwards, and hence Lemma \ref{lem.SDHLfromSDHL'3} also holds for $\zeta$
 in a final segment of $\kappa$.
\end{remark}

\begin{theorem}\label{thm.SDHL=SDHL'}
Suppose $\kappa$ is weakly compact.
Let $\sigma, \delta > 0$ be ordinals and assume
 $\delta, \sigma < \kappa$.
Then
 $\textrm{SDHL}'(\delta,\sigma,\kappa)$ holds if and only if  $\textrm{SDHL}(\delta,\sigma,\kappa)$ holds.
\end{theorem}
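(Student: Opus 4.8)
The plan is to dispatch the two directions separately, with essentially all the work in the nontrivial direction having been front-loaded into the three preceding lemmas. The direction $\textrm{SDHL}(\delta,\sigma,\kappa)\Rightarrow\textrm{SDHL}'(\delta,\sigma,\kappa)$ is immediate, and I would simply cite Fact \ref{fact.simpleimplications}: a witnessing configuration for $\textrm{SDHL}$ is in particular a somewhere dense matrix, and any coloring of the full product restricts to one of the level product, so constancy transfers. Hence the substance of the theorem is the converse $\textrm{SDHL}'(\delta,\sigma,\kappa)\Rightarrow\textrm{SDHL}(\delta,\sigma,\kappa)$.

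For that converse I would argue as follows. Assume $\textrm{SDHL}'(\delta,\sigma,\kappa)$, let $\langle T_i : i<\delta\rangle$ be regular trees, and let $c:\bigotimes_{i<\delta}T_i\to\sigma$ be a coloring of the level product. First I would invoke Lemma \ref{lem.SDHLfromSDHL'3} to fix a single level $1\le\zeta'<\kappa$ with the key property that \emph{every} coloring of $\bigotimes_{i<\delta}T_i(\zeta')$ into $\sigma$ is constant on $\bigotimes_{i<\delta}X_i$ for some somewhere dense matrix $\langle X_i\sse T_i(\zeta'):i<\delta\rangle$ living on level $\zeta'$. Then I would apply this to the particular coloring $\bar c:=c\restriction\bigotimes_{i<\delta}T_i(\zeta')$, producing such a monochromatic somewhere dense matrix $\langle X_i:i<\delta\rangle$.

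The remaining step is purely a matter of matching definitions, which I would carry out as follows. Unwinding ``somewhere dense matrix'' gives nodes $t_i\in T_i$ of a common length $\zeta$ such that each $X_i$ dominates all successors of $t_i$, equivalently dominates $T_i(\zeta+1)\cap\Cone(t_i)$. Because the nodes of $X_i$ sit on level $\zeta'$ while the successors they dominate sit on level $\zeta+1$, one must have $\zeta+1\le\zeta'$, so $\zeta<\zeta'$, exactly as $\textrm{SDHL}$ requires; moreover each $X_i\sse T_i(\zeta')$ is a level set. Finally $|c``\bigotimes_{i<\delta}X_i|=|\bar c``\bigotimes_{i<\delta}X_i|=1$, since $c$ and $\bar c$ agree on the level product at $\zeta'$. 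Thus $\zeta,\zeta',\langle t_i:i<\delta\rangle,\langle X_i:i<\delta\rangle$ witness $\textrm{SDHL}(\delta,\sigma,\kappa)$, completing the proof.

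I do not expect a genuine obstacle at this stage, precisely because the difficult content is already encapsulated in Lemmas \ref{lem.SDHLfromSDHL'1}--\ref{lem.SDHLfromSDHL'3}: the use of weak compactness to extract a length-$\kappa$ branch through the tree of partial colorings (turning a $\kappa$-sized family of bad colorings into a single bad coloring of the whole product), and the subsequent localization to one fixed level. Relative to those, the only care needed in the theorem itself is the bookkeeping that certifies the somewhere dense matrix obtained at level $\zeta'$ really is an $\textrm{SDHL}$ configuration --- in particular that the common level $\zeta$ of the $t_i$ lies strictly below $\zeta'$, and that passing from $c$ to its restriction $\bar c$ does not affect monochromaticity on the chosen level product.
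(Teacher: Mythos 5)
Your proposal is correct and takes essentially the same route as the paper: the paper's proof is simply ``This follows by the lemmas above,'' and your argument spells out exactly how those lemmas combine --- the easy direction via Fact \ref{fact.simpleimplications}, and the converse by applying Lemma \ref{lem.SDHLfromSDHL'3} to the restriction of $c$ to a single level and unwinding the definition of somewhere dense matrix. Your bookkeeping (that $\zeta+1\le\zeta'$ forces $\zeta<\zeta'$, using that regular trees have nonempty successor sets, and that monochromaticity transfers from $\bar c$ to $c$) is the right and complete way to certify the resulting configuration witnesses $\textrm{SDHL}(\delta,\sigma,\kappa)$.
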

\begin{proof}
This follows by the lemmas above.
\end{proof}


Jing Zhang recognized that the proof that $\textrm{SDHL}$
 implies $\textrm{HL}$ in\
 \cite{TodorcevicBK10} works for an arbitrary
 inaccessible $\kappa$, even one that is not weakly compact.

\begin{lemma}
\label{expanding_one_line}
Let $\kappa$ be a cardinal and
 let $\delta, \sigma < \kappa$ be non-zero ordinals.
Let $\langle T_i : i < \delta \rangle$
 be a sequence of regular trees.
Fix $c : \bigotimes_{i < \delta} T_i \to \sigma$.
Suppose there is a level sequence
 $\vec{x} = \langle x_i \in T_i : i < \delta \rangle$ such that
 for each $\xi < \kappa$, there is a level sequence
 $\langle X_i \subseteq T_i(\zeta) : i < \delta \rangle$
 for some $\zeta$ such that
 $\langle X_i : i < \delta \rangle$ is
 $\xi$-$\vec{x}$-dense and
 $|c``\bigotimes_{i<\delta}X_i| = 1$.
Then the conclusion of $\textrm{HL}(\delta,\sigma,\kappa)$
 holds.
\end{lemma}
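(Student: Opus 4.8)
The plan is to use the monochromatic dense matrices furnished by the hypothesis as the building blocks of a single strong subtree, all of whose splitting levels lie above the fixed $\vec{x}$, with $x_i$ sitting on the stem of $T'_i$ so that the domination clause is always available. First I would pin down a single color. For each $\xi<\kappa$ let $g(\xi)<\sigma$ be the color of the matrix $\langle X_i\rangle$ that the hypothesis provides for that $\xi$. Since $\sigma<\kappa$, some color $\sigma'$ is attained by $g$ on a set $C\sse\kappa$ that is cofinal in $\kappa$ (this holds whenever $\kappa$ is regular, as in the intended applications, by a pigeonhole on $g\colon\kappa\to\sigma$). This $\sigma'$ will be the color witnessing the conclusion of $\textrm{HL}(\delta,\sigma,\kappa)$, and crucially the hypothesis places all $X_i$ on a common level $\zeta$, so the splitting levels I read off below are automatically the same for every $i$.

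Next I would build the splitting set $A=\{a_\eta:\eta<\kappa\}$ and the trees $T'_i$ by recursion on $\eta$, maintaining that every node placed so far extends $x_i$ and that $\bigotimes_{i<\delta}T'_i(a_{\eta'})$ is monochromatic in $\sigma'$ for each $\eta'\le\eta$. The key extension step is this: given a current front $F_i$, a level set of nodes above $x_i$ sitting on some level $m$, choose $\xi\in C$ with $\xi>m$ and take the corresponding matrix $\langle X_i\rangle$ on level $\zeta\ge\xi$. Because $X_i$ dominates $T_i(\xi)\cap\Cone(x_i)$ and each $s\in F_i$ (which extends $x_i$) has a level-$\xi$ extension $t\in T_i$ with $t\in\Cone(x_i)$, there is some $\hat s\in X_i$ with $s\sqsubseteq t\sqsubseteq\hat s$; as distinct members of $F_i$ are incomparable, so are their images $\hat s$. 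I then declare $a_\eta:=\zeta$, set $T'_i(a_\eta):=\{\hat s:s\in F_i\}\sse X_i$ with the unique paths from $s$ to $\hat s$ occupying the (non-splitting) levels strictly between $m$ and $\zeta$, and split fully at $a_\eta$. Since $T'_i(a_\eta)\sse X_i$, we get $\bigotimes_{i<\delta}T'_i(a_\eta)\sse\bigotimes_{i<\delta}X_i$, which is monochromatic in $\sigma'$.

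The base case and the limit stages require a short comment. For $\eta=0$ there is no front: pick $\xi\in C$ with $\xi>\lh(\vec{x})$, take the $\sigma'$-colored $\langle X_i\rangle$, and choose a single $y_i^*\in X_i$ with $x_i\sqsubseteq y_i^*$ (one exists because $X_i$ dominates the nonempty set of level-$\xi$ extensions of $x_i$). I let the stem of $T'_i$ run up to $y_i^*$, put $a_0:=\lh(y_i^*)$ and $T'_i(a_0):=\{y_i^*\}$; the lone tuple $\langle y_i^*:i<\delta\rangle$ has color $\sigma'$ because it lies in $\bigotimes_{i<\delta}X_i$, so even this otherwise-trivial splitting level carries $\sigma'$. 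At a limit stage $\eta$, set $\ell:=\sup_{\eta'<\eta}a_{\eta'}$; each branch of the approximation built so far is a chain of length $<\kappa$, and its union is a node of $T_i$ on level $\ell$ by regularity of $T_i$ (otherwise that chain would be a maximal branch of cofinality $<\kappa$). These limit nodes form a front $F_i\sse T_i(\ell)$ above $x_i$, to which the extension step of the previous paragraph applies verbatim.

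Finally I would check that $\langle T'_i:i<\delta\rangle$ matches the definition of the HL conclusion: $A$ is cofinal in $\kappa$ (an increasing $\kappa$-sequence of ordinals below regular $\kappa$); each level of $T'_i$ is contained in the corresponding level of $T_i$, hence has size $<\kappa$, so $T'_i$ is a regular $\kappa$-tree; by construction $T'_i$ is a strong subtree of $T_i$ witnessed by the common set $A$; and $\bigotimes_{i<\delta}T'_i(\zeta)$ is monochromatic in $\sigma'$ for every $\zeta\in A$. The conceptual obstacle is the global coordination of one color: the hypothesis only guarantees monochromaticity level-by-level, with a color that may vary with $\xi$, so the pigeonhole selection of $\sigma'$ on a cofinal set $C$ and the device of forcing the first (single-node) splitting level to lie inside a $\sigma'$-colored matrix are the two points that actually do the work. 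That the domination clause lets one extend an entire front above $x_i$ simultaneously, and that limits of branches remain inside the regular trees $T_i$, are routine verifications.
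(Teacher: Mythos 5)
Your proof is correct and follows essentially the same strategy as the paper's: a length-$\kappa$ recursion that builds the strong subtrees front by front, invoking the hypothesis's monochromatic dense matrices at each step, mapping each member of the current front to a unique (automatically incomparable) extension inside the matrix, splitting fully at the chosen levels, and using regularity of the $T_i$ to take limits of branches at limit stages. Two minor differences are worth recording. First, you pigeonhole at the outset, fixing one color $\sigma'$ on a cofinal set $C \subseteq \kappa$ and using only matrices for $\xi \in C$, whereas the paper runs the construction with a possibly varying color $\sigma_\zeta$ at each constructed level and pigeonholes at the end, thinning the resulting trees to the levels carrying a single color; both placements of the pigeonhole require $\mathrm{cf}(\kappa) > \sigma$ (you flag this via regularity of $\kappa$; the paper's closing pigeonhole needs it equally), so neither version is more general. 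Second, at limit stages the paper's construction (its Case 3) puts \emph{all} immediate successors of the limit nodes into the subtree while the limit level is not placed into $A$, which as written conflicts with the unique-successor requirement of Definition \ref{def.strongsubtree} at levels outside $A$; your treatment, extending each limit node along a single path to the next splitting level, conforms to that definition exactly and is the cleaner way to run this step.
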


\begin{proof}
We will construct a sequence of strong subtrees
 $\langle T_i' \subseteq T_i : i < \delta \rangle$,
 as witnessed by the same set $A \subseteq \kappa$
 independent of $i$, such that for each $\zeta \in A$,
 there is some $\sigma_\zeta < \sigma$ such that
 $c``\bigotimes_{i < \delta} T_i'(\zeta)
 = \{ \sigma_\zeta \}$.
Then, by the pigeonhole principal, there will be
 $\kappa$ many $\sigma_\zeta$ that are equal to the
 same ordinal, call it $\sigma'$.
Let $\tilde{A} \subseteq A$ be the set of levels
 associated to the color $\sigma'$.
We may then thin each $T_i'$ to a strong subtree
 $T_i''$ as witnessed by $\tilde{A}$
 (independent of $i$) such that
 for each $\zeta \in \tilde{A}$,
 $c``\bigotimes_{i < \delta} T_i''(\zeta) =
 \{ \sigma' \}$.
This is the conclusion of
 $\textrm{HL}(\delta,\sigma,\kappa)$.

For each $i < \delta$,
 instead of directly constructing $T_i'$,
 we will construct a subset $S_i$ of $T_i'$
 and $T_i'$ will be the set of all initial
 segments of elements of $S_i$.
Each $S_i$ will be the union of level sets:
 $S_i := \bigcup_{\zeta \in A} L_{i,\zeta}$
 where each $L_{i,\zeta}$ will be a subset of
 $T_i(\zeta)$.
At the same time, we will construct $A \subseteq \kappa$.
We will have it so
 $(\forall \zeta \in A)\,
 |c``\bigotimes_{i<\delta} L_{i,\zeta}| = 1$.
Initially the set $A$ is empy and
 no $L_{i,\zeta}$'s have been defined.

Now assume we are at some stage of the construction.
There are three cases:

\underline{Case 1}:
No $L_{i,\zeta}$'s have been constructed so far
 (and $A$ is empty).
In this case, set $\xi$ to be the level of the $x_i$
 and for each $i < \delta$,
 let $U_{i,\xi+1} := T_i(\xi + 1) \cap
  \mbox{Cone}(x_i)$.

\underline{Case 2}:
Some $L_{i,\zeta}$'s have been constructed and
 there is a largest $\xi < \kappa$ for which
 some $L_{i,\xi}$ exists.
Fix this $\xi$.
For each $i < \delta$, define
 $U_{i,\xi+1} := T_i(\xi+1) \cap
 (\bigcup_{t \in L_{i,\xi}} \mbox{Cone}(t) )$.

\underline{Case 3}:
The set of $\eta$'s for which the
 $L_{i,\eta}$'s exist is below but cofinal in
 some fixed $\xi < \kappa$.
Let $W_i \subseteq T_i(\xi)$
 be the set of $t \in T_i(\xi)$ such that
 the set of $\eta < \xi$ such that
 $t \restriction \eta \in L_{i,\eta}$
 is cofinal in $\xi$.
For each $i < \delta$, define
 $U_{i,\xi+1} := T_i(\xi + 1) \cap
 (\bigcup_{t \in W} \mbox{Cone}(t))$.

Assuming one of the three cases above holds,
 we have both an ordinal $\xi$ and a set
 $U_{i,\xi+1} \subseteq T_i(\xi+1)$
 for each $i < \delta$.
Apply the hypothesis of the lemma to get a level
 sequence $\langle X_i : i < \delta \rangle$
 that is $(\xi+1)$-$\vec{x}$-dense and
 $|c``\bigotimes_{i<\delta}X_i| = 1$.
For each $i < \delta$,
 let $X_i' \subseteq X_i$ be such that
 each $t \in X_i'$ extends some $u \in U_{i,\xi+1}$,
 every $u \in U_{i, \xi+1}$
 is extended by some $t \in X_i'$,
 and no two elements of $X_i'$ extend the same element
 of $U_{i,\xi+1}$.
Let $\zeta$ be the level of the $X_i'$.
Add $\zeta$ to the set $A$
 and set $L_{i,\zeta} := X_i'$ for each $i < \delta$.
This completes the construction and the proof.
\end{proof}

\begin{theorem}\label{thm.SDHLimpliesHL}
Let $\kappa$ be an infinite cardinal and
 let $\delta, \sigma < \kappa$ be non-zero ordinals.
Then  $\textrm{SDHL}(\delta,\sigma,\kappa)$ implies  $\textrm{HL}(\delta,\sigma,\kappa)$.
\end{theorem}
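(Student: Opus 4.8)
The plan is to apply Lemma~\ref{expanding_one_line}, which reduces $\textrm{HL}(\delta,\sigma,\kappa)$ to the task of producing a single level sequence $\vec{x} = \langle x_i \in T_i : i < \delta \rangle$ with the property that for every $\xi < \kappa$ there is a monochromatic level sequence $\langle X_i : i < \delta \rangle$ that is $\xi$-$\vec{x}$-dense. First I would record a monotonicity observation: because each $T_i$ is regular, every node of $T_i(\xi) \cap \textrm{Cone}(x_i)$ extends to a node of $T_i(\xi') \cap \textrm{Cone}(x_i)$ whenever $\xi < \xi'$, so any $\xi'$-$\vec{x}$-dense matrix is automatically $\xi$-$\vec{x}$-dense. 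Consequently it suffices to find one $\vec{x}$ that admits monochromatic $\xi$-$\vec{x}$-dense matrices for \emph{cofinally many} $\xi < \kappa$.

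Next I would isolate exactly what a single use of $\textrm{SDHL}(\delta,\sigma,\kappa)$ provides. Applied to $\langle T_i : i<\delta\rangle$ and $c$, it returns levels $\zeta < \zeta'$, a base $\vec{t} = \langle t_i \in T_i(\zeta) : i<\delta\rangle$, and a matrix $\langle X_i \subseteq T_i(\zeta') : i<\delta\rangle$, monochromatic for $c$, with each $X_i$ dominating the successors of $t_i$. Since the successors of $t_i$ are precisely $T_i(\zeta+1) \cap \textrm{Cone}(t_i)$, this matrix is already $(\zeta+1)$-$\vec{t}$-dense. Thus $\textrm{SDHL}$ by itself delivers full-level density, but only one level above the base it chooses; the entire difficulty is to fix a base $\vec{x}$ and then obtain full-level monochromatic density cofinally far \emph{above that fixed base}.

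To bridge this gap I would fix $\vec{x} := \vec{t}$ from an initial application of $\textrm{SDHL}$ and then bootstrap to higher levels by a fusion internal to the cones $\textrm{Cone}(x_i)$, combined with a peeling of the $\sigma$ colors in the spirit of Fact~\ref{fact.1treeZFC}. For a target level $\xi$, the goal is a matrix $\langle X_i : i<\delta\rangle$ with each $X_i$ dominating all of $T_i(\xi) \cap \textrm{Cone}(x_i)$ and with $c$ constant on $\bigotimes_{i<\delta} X_i$. The idea is to apply $\textrm{SDHL}$ repeatedly inside sub-cones above $\vec{x}$, each application yielding a monochromatic block dominating the successors of a single base, and to amalgamate these blocks so that together they dominate the whole of level $\xi$ in the cone. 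A pigeonhole over the $\sigma < \kappa$ colors then isolates one color $\sigma'$ that recurs, and passing to the levels carrying $\sigma'$ yields a base $\vec{x}$ of the form required by Lemma~\ref{expanding_one_line}.

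The main obstacle is performing this amalgamation simultaneously across all $\delta$ trees with a single common color. Fact~\ref{fact.1treeZFC} carries out exactly such an amalgamation when $\delta = 1$: there the representatives chosen in distinct successor-cones are combined freely because monochromaticity is a condition on one tree only. In the product, by contrast, the representatives selected in the various sub-cones must be \emph{jointly} $\sigma'$-homogeneous, and it is precisely here that the full multi-tree strength of $\textrm{SDHL}$, rather than the one-tree argument, must be invoked. Keeping $\vec{x}$ fixed while the successive $\textrm{SDHL}$ applications descend into deeper and deeper sub-cones, and stabilizing a single color $\sigma'$ for cofinally many $\xi$, are the delicate bookkeeping points; once they are handled, Lemma~\ref{expanding_one_line} finishes the proof.
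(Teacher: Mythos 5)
Your reduction to Lemma~\ref{expanding_one_line} and the monotonicity observation (a $\xi'$-$\vec{x}$-dense matrix is $\xi$-$\vec{x}$-dense for $\xi<\xi'$, by regularity of the trees) match the paper, and you have correctly located the difficulty: $\textrm{SDHL}$ yields density only one level above a base of \emph{its own} choosing, while the lemma needs density at arbitrarily high levels above one \emph{fixed} base. But the bridge you propose --- applying $\textrm{SDHL}$ repeatedly inside sub-cones above a fixed $\vec{x}$ and amalgamating the resulting monochromatic blocks, with a pigeonhole over colors --- does not work, and the paragraph where you concede ``the main obstacle'' is exactly where the proof is missing; it is not bookkeeping. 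If $X_i$ is the union of blocks $X_i^{j}$ coming from separate applications of $\textrm{SDHL}$, then $\bigotimes_{i<\delta}X_i$ contains cross terms whose coordinates come from different blocks; no single application of $\textrm{SDHL}$ ever colored those tuples, and pigeonholing a recurring color $\sigma'$ among the blocks says nothing about them. For $\delta=1$ there are no cross terms, which is precisely why Fact~\ref{fact.1treeZFC} can amalgamate freely; for $\delta\ge 2$ controlling cross terms \emph{is} the Halpern--L\"auchli problem. In addition, blocks produced by different applications live on different levels, so their union is not even a level sequence, and extending nodes upward to a common level destroys whatever color information you had. (A smaller point: your worry about stabilizing one color $\sigma'$ across cofinally many $\xi$ is unnecessary, since Lemma~\ref{expanding_one_line} lets the color depend on $\xi$ and performs the pigeonhole itself.)

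The paper's proof avoids amalgamation entirely, via a diagonalization you are missing. Suppose no $\vec{x}$ satisfies the hypothesis of Lemma~\ref{expanding_one_line}; then each level sequence $\vec{x}$ has a least failure level $\zeta(\vec{x})<\kappa$, and by monotonicity no monochromatic $\xi$-$\vec{x}$-dense level sequence exists for any $\xi\ge\zeta(\vec{x})$. Recursively build an increasing sequence $\langle \eta_\alpha : \alpha<\kappa\rangle$ so that $\eta_{\alpha+1}>\zeta(\vec{x})$ for every $\vec{x}$ lying on levels $\le\eta_\alpha$, and form the thinned trees $T_i^* := \bigcup_{\alpha<\kappa}T_i(\eta_\alpha)$, whose $\alpha$-th level is $T_i(\eta_\alpha)$. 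Now apply $\textrm{SDHL}(\delta,\sigma,\kappa)$ \emph{once}, to $\langle T_i^* : i<\delta\rangle$ with the restricted coloring: it returns a base $\vec{x}\in\bigotimes_{i<\delta}T_i(\eta_\alpha)$ for some $\alpha$ and a single monochromatic matrix dominating the successors of the $x_i$ in $T_i^*$, which in the original trees means dominating $T_i(\eta_{\alpha+1})\cap\textrm{Cone}(x_i)$. Since $\eta_{\alpha+1}>\zeta(\vec{x})$, this contradicts the choice of $\zeta(\vec{x})$. The point of this speed-up trick is that one step in $T^*$ is an arbitrarily long, pre-arranged jump in $T$, so somewhere-density one level up becomes full-level density past the failure ordinal, using only one application of $\textrm{SDHL}$ --- and hence the cross-term problem never arises.
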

\begin{proof}
Fix $c : \bigotimes_{i < \delta} T_i \to \sigma$.
We will use Lemma~\ref{expanding_one_line}.
We claim
 that there is a level sequence
 $\vec{x} = \langle x_i \in T_i : i < \delta \rangle$
 such that for each $\zeta < \kappa$, there is a
 level sequence
 $\langle X_i \subseteq T_i : i < \delta \rangle$
 that is
 $\zeta$-$\vec{x}$-dense
 and $|c``\bigotimes_{i<\delta}X_i| = 1$.
From this, the conclusion of
 $\textrm{HL}(\delta,\sigma,\kappa)$ follows
 from Lemma~\ref{expanding_one_line}.
To show the claim, suppose towards a contradiction
 that it is false.
For each $\vec{x} \in \bigotimes_{i < \delta} T_i$
 we can associate the first $\zeta(\vec{x}) < \kappa$
 for which we cannot find a
 $\zeta(\vec{x})$-$\vec{x}$-dense
 level sequence
 $\langle X_i : i < \delta \rangle$ such that
 $|c``\bigotimes_{i<\delta} X_i| = 1$.
We can build a strictly increasing sequence
 $\langle \eta_\alpha \in \kappa : \alpha < \kappa \rangle$
 of ordinals such that for every $\alpha < \kappa$,
 $$\eta_{\alpha+1} > \zeta(\vec{x})
 \mbox{ for all } \vec{x} \in
 \bigcup_{\eta \le \eta_\alpha} \bigotimes_{i < \delta}
 T_i(\eta).$$
For $i < \delta$, set
 $$T_i^* := \bigcup_{\alpha < \kappa} T_i(\eta_\alpha).$$
Applying $\textrm{SDHL}(\delta,\sigma,\kappa)$
 to the restriction of $c$ to
 $\bigotimes_{i < \delta} T_i^*$, we find a 
 somewhere dense matrix
 $\langle X_i^* : i < \delta \rangle$ of
 the sequence of trees
 $\langle T_i^* : i < \delta \rangle$
 on which $c$ is constant.
This means that there exists
 $\alpha < \alpha' < \kappa$ and
 $\vec{x}
 = \langle x_i : i < \delta \rangle
 \in \bigotimes_{i < \delta} T_i(\eta_\alpha)$
 such that for all $i < \delta$,
 $$X_i^* \mbox{ dominates }
 \mbox{Cone}(x_i) \cap T_i(\eta_{\alpha'}).$$
It follows that
 $\langle X_i^* : i < \delta \rangle$
 is an $\eta_{\alpha'}$-$\vec{x}$-dense
 $c$-monochromatic level sequence,
 contradicting the definition of
 $\zeta(\vec{x})$ and the fact that
 $$\eta_{\alpha'} \ge \eta_{\alpha+1} > \zeta(\vec{x}).$$
This finishes the proof.
\end{proof}

 Fact \ref{fact.1treeZFC} and Theorem \ref{thm.SDHLimpliesHL} yield the following.

\begin{theorem}\label{thm.HL1treewc}
 HL$(1,k,\kappa)$ holds for each positive integer $k$
  and each infinite cardinal $\kappa$.
\end{theorem}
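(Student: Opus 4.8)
The plan is to obtain the result as an immediate corollary of the two preceding statements, exactly as the transitional sentence suggests. First I would invoke Fact~\ref{fact.1treeZFC}, which asserts that $\textrm{SDHL}(1,k,\kappa)$ holds for every infinite cardinal $\kappa$ and every positive integer $k$. This hands us the somewhere-dense version for a single tree with finitely many colors outright, with no hypothesis on $\kappa$ beyond being infinite (in particular, no large-cardinal or inaccessibility assumption is needed, since Fact~\ref{fact.1treeZFC} is a ZFC pigeonhole argument on the successor level sets).

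Next I would apply Theorem~\ref{thm.SDHLimpliesHL}, the transfer principle stating that $\textrm{SDHL}(\delta,\sigma,\kappa)$ implies $\textrm{HL}(\delta,\sigma,\kappa)$ whenever $\delta,\sigma<\kappa$ are non-zero ordinals. Instantiating this with $\delta=1$ and $\sigma=k$ turns the conclusion of Fact~\ref{fact.1treeZFC} into $\textrm{HL}(1,k,\kappa)$, which is precisely what is wanted.

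The only thing requiring attention is checking that the arithmetic side conditions of Theorem~\ref{thm.SDHLimpliesHL} are met. Here one notes that $\kappa$ is infinite, so $\omega\le\kappa$; since $k$ is a finite positive integer we get $k<\omega\le\kappa$, and trivially $1<\kappa$, while both $1$ and $k$ are non-zero. Thus the parameters $\delta=1$ and $\sigma=k$ satisfy $\delta,\sigma<\kappa$, and the hypotheses of Theorem~\ref{thm.SDHLimpliesHL} are fully verified.

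I do not anticipate any genuine obstacle: all the substantive work has already been carried out, namely the elementary level-set pigeonhole of Fact~\ref{fact.1treeZFC} and the strong-subtree construction underlying Lemma~\ref{expanding_one_line} that drives the $\textrm{SDHL}\Rightarrow\textrm{HL}$ transfer in Theorem~\ref{thm.SDHLimpliesHL}. The proof is therefore a one-line composition of these two results once the cardinality bookkeeping above is recorded.
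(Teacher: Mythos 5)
Your proposal is correct and is exactly the paper's own argument: the paper derives Theorem~\ref{thm.HL1treewc} as an immediate consequence of Fact~\ref{fact.1treeZFC} together with Theorem~\ref{thm.SDHLimpliesHL} (which, as you note, requires only that $\kappa$ be infinite and $\delta,\sigma<\kappa$ be non-zero). Your verification of the side conditions $1,k<\kappa$ is the only bookkeeping needed, and it is done correctly.
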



\section{The Halpern-L\"auchli Theorem at a measurable cardinal}\label{sec.4}

In Theorem \ref{thm.HLmbl}, we prove 
that for   any positive integer  $d$, 
assuming the existence of a cardinal $\kappa$  which is $\kappa+d$-strong cardinal (see Definition \ref{def.gammastrong}),
  it is consistent that HL$(d,\sigma,\kappa)$ holds at a measurable cardinal $\kappa$, for all $\sigma<\kappa$.
Actually,  in 
Theorem \ref{thm.generalHL} we prove an asymmetric version of the SDHL$(\delta,\sigma,\kappa)$ which holds for finitely or  infinitely many trees on a measurable cardinal.
The  requisites for that theorem are that 
certain partition relations
(see hypotheses 1 and 2 of Theorem\ref{thm.generalHL})
hold in   a model $M$  of ZF and that $\kappa$ remains measurable after forcing over $M$ with Add$(\kappa,\lambda)$, where 
 Add$(\kappa,\lambda)$ denotes the forcing which adds $\lambda$ many $\kappa$-Cohen subsets of $\kappa$ via partial functions from $\lambda\times\kappa$ into $2$ of size less than $\kappa$
and
$\lambda$ is large enough for a certain partition relation to  hold.
For infinitely many trees, the relevant partition relations hold assuming $\ad$,
 and hence in $L(\mathbb{R})$ assuming in $V$ the existence of a limit of Woodin cardinals with a measurable above.
However,
 we do not currently know of such  a model  of ZF  over which forcing to add many Cohen subsets of a measurable cardinal preserves the measurability 
(see Question \ref{q.ZFmodel} in the next section).
Nevertheless, we prove  Theorem  \ref{thm.generalHL}
in this  generality  with the optimism  that such a model will be found.

Before going any further,
 let us recall the
 standard partition relation notation:
\begin{definition}
Given cardinals $\lambda, \kappa, d, \sigma$,
 the notation
 $\lambda \rightarrow (\kappa)^d_\sigma$
 represents the statement that
 given any coloring of the size $d$ subsets of
 $\lambda$ using $\sigma$ colors,
 there is a size $\kappa$ subset of $\lambda$
 all of whose size $d$ subsets are the same color.
\end{definition}

We point out that 
 Harrington's original  forcing proof of HL$(d,k,\om)$ can be lifted, with minor modifications, to obtain HL$(d,\sigma,\kappa)$ for $\kappa$ measurable and $\sigma<\kappa$, provided, as in \cite{Shelah91}, we work in a model of ZFC in which $\kappa$ remains measurable after forcing with Add$(\kappa,\lambda)$, where $\lambda$ satisfies the partition relation $\lambda\ra(\kappa^+)^{2d}_2$.
An exposition of Harrington's original proof can be found in
 \cite{rims}.
In order for this to hold via methods of Woodin, one needs to assume the existence of a cardinal $\kappa$ which is  $\kappa+2d$-strong.
Thus, in a somewhat straightforward manner, combining known results, one may arrive at a  $\kappa+2d$-strong cardinal as an upper bound for the consistency strength of HL$(d,\sigma,\kappa)$ for $\kappa$ measurable and $\sigma<\kappa$.

However, we are interested in  the actual  consistency strength of HL$(d,\sigma,\kappa)$ for $\kappa$ measurable.
 The version of  Harrington's forcing proof for perfect subtrees of ${}^{<\om}2$  given by Todorcevic  in Theorem 8.15 of \cite{Farah/TodorcevicBK}
uses the weaker partition relation 
$\lambda\ra(\aleph_0)^d_2$, rather than the usual $\lambda\ra(\aleph_0)^{2d}_2$.
If we could lift his argument to a measurable cardinal, this would bring the consistency strength down to a $d$-strong cardinal.
One of the key lemmas in Todorcevic's  proof, though, relies strongly on the fact that $\aleph_0$ is the least infinite cardinal and could not be generalized to uncountable $\kappa$.
However,
using that $\kappa$ is measurable, we found a way around this.
We modified Harrington's argument and proved Lemma ~\ref{key_combo_lemma}
 which uses partition relations which are satisfied when $\kappa$ is measurable
 (assuming $\delta < \omega$).
This lemma aids in bringing the upper bound of 
the consistency strength of HL$(d,\sigma,\kappa)$ for $\kappa$ measurable down to a $\kappa+d$-strong cardinal in Theorem \ref{thm.HLmbl}.

\begin{definition}
Temporarily let $\mbb{P}$ be the forcing to add $\lambda$ Cohen subsets of $\kappa$.
A collection $\mc{X} \subseteq \mbb{P}$ is called \defemph{image homogenized}
 if for all $p_1, p_2 \in \mc{X}$ and $\xi$, $\alpha$, $\beta$ $\in \mbox{Ord}$,
 if $\alpha$ is the $\xi$-th element of $\dom(p_1)$ and
 $\beta$ is the $\xi$-th element of $\dom(p_2)$,
 then $p_1(\alpha) = p_2(\beta)$.
\end{definition}

Suppose that we have parameterized conditions in $\mbb{P}$
 according to $[\lambda]^\delta$.
That is, for each $\vec{\alpha} \in [\lambda]^\delta$
 we have some $p_{\vec{\alpha}}$.
For each $\vec{\alpha} \in [\lambda]^\delta$, let
 $$\langle \nu(\vec{\alpha},\xi) :
 \xi < \mbox{ot}(\dom(p_{\vec{\alpha}})) \rangle$$
 be the increasing enumeration of the elements
 of $\dom(p_{\vec{\alpha}})$.

\begin{lemma}\label{key_combo_lemma}
Let $1 \le \delta < \kappa \le \lambda$ be ordinals
 with $\kappa$ and $\lambda$ infinite cardinals and
 $\kappa$ strongly inaccessible.
Assume that for all $\mu_1, \mu_2 < \kappa$,
 $$
 \kappa \rightarrow (\mu_1)_{\mu_2}^{\delta \cdot 2}.$$
For each $\vec{\alpha} \in [\kappa]^{\delta}$,
 let $p_{\vec{\alpha}}$ be a condition in the forcing
 to add $\lambda$ many Cohen subsets of $\kappa$.
Assume that the conditions
 $p_{\vec{\alpha}}$ for $\alpha \in [\kappa]^\delta$
 are image homogenized.
Then for each $\gamma < \kappa$ there is a sequence
 $$\langle H_i \subseteq \kappa : i < \delta \rangle$$
 such that
 $(\forall i < \delta)\, \mbox{ot}(H_i) \ge \gamma$,
 $(\forall i < j < \delta)$ every element of
 $H_i$ is less than every element of $H_j$, and
 the conditions $p_{\vec{\alpha}}$ for $\vec{\alpha}
 \in \prod_{i < \delta} H_i$ are pairwise compatible.
\end{lemma}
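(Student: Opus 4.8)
\noindent\emph{Proof strategy.}
The plan is to reduce pairwise compatibility to a statement about the positions of common domain elements, and then to force that statement on a product set by homogenizing an order-type coloring. First I would unpack image homogenization: it yields a single sequence $\langle v_\xi : \xi < \kappa \rangle$ of values in $2$ such that $p_{\vec{\alpha}}(\nu(\vec{\alpha},\xi)) = v_\xi$ for every $\vec{\alpha} \in [\kappa]^\delta$ and every $\xi < \mathrm{ot}(\dom(p_{\vec{\alpha}}))$. Thus each $p_{\vec{\alpha}}$ is determined by its domain, and for any two indices,
$$\eta = \nu(\vec{\alpha},\xi) = \nu(\vec{\beta},\xi') \ \Longrightarrow\ p_{\vec{\alpha}}(\eta) = v_\xi \ \text{ and }\ p_{\vec{\beta}}(\eta) = v_{\xi'}.$$
Hence $p_{\vec{\alpha}}$ and $p_{\vec{\beta}}$ are compatible whenever every common domain element occupies the \emph{same} position in both enumerations (then $v_\xi = v_{\xi'}$ trivially). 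So it suffices to produce blocks $H_0 < \cdots < H_{\delta-1}$, each of order type at least $\gamma$, such that for all $\vec{\alpha}, \vec{\beta} \in \prod_{i<\delta} H_i$ every $\eta \in \dom(p_{\vec{\alpha}}) \cap \dom(p_{\vec{\beta}})$ has equal positions in $\dom(p_{\vec{\alpha}})$ and $\dom(p_{\vec{\beta}})$.

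To arrange this I would color pairs of index tuples by the merged order type of their two domains. For a fixed coincidence pattern $S \subseteq \delta$ (the coordinates on which the two tuples agree) and a fixed sign pattern $\epsilon$ on the remaining coordinates (recording, blockwise, which of the two tuples has the smaller entry), each pair gives a set of $\mathrm{ot}(\delta \cdot 2) - |S|$ ordinals; I would color that set by the isomorphism type of $(\dom(p_{\vec{\alpha}}), \dom(p_{\vec{\beta}}))$ as two marked suborders, recording in particular which positions $\xi, \xi'$ satisfy $\nu(\vec{\alpha},\xi) = \nu(\vec{\beta},\xi')$. Since the union set does not remember the orientation, the color must record this merged type \emph{for each} of the finitely many orientations (when $\delta < \omega$), which keeps the color in a finite product. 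After a preliminary normalization bounding domain order types by a fixed $\rho < \kappa$ (so that, by strong inaccessibility, $2^\rho < \kappa$ and the coloring uses fewer than $\kappa$ colors), I would apply $\kappa \to (\mu_1)^{\delta \cdot 2}_{\mu_2}$, together with its consequences for the smaller exponents $\mathrm{ot}(\delta\cdot 2) - |S|$, with $\mu_1 \ge \gamma \cdot \delta$, obtaining a homogeneous $H$ that I cut into $\delta$ consecutive blocks, each of order type at least $\gamma$.

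The heart of the argument is that homogeneity forces matched positions. Suppose the constant pattern for some type $(S,\epsilon)$ records an identification $\nu(\mathrm{first},\xi) = \nu(\mathrm{second},\xi')$ with $\xi \ne \xi'$. Because each block has order type at least $\gamma \ge 3$, I can choose three tuples $\vec{\alpha}^0, \vec{\alpha}^1, \vec{\alpha}^2 \in \prod_{i<\delta} H_i$ realizing $(S,\epsilon)$ (agreeing on the coordinates in $S$, and increasing or decreasing off $S$ according to $\epsilon$) so that each of the pairs $(\vec{\alpha}^0,\vec{\alpha}^1)$, $(\vec{\alpha}^1,\vec{\alpha}^2)$, $(\vec{\alpha}^0,\vec{\alpha}^2)$ has type $(S,\epsilon)$ with first coordinate the earlier tuple. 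The constant pattern then gives $\nu(\vec{\alpha}^0,\xi) = \nu(\vec{\alpha}^1,\xi')$, $\nu(\vec{\alpha}^1,\xi) = \nu(\vec{\alpha}^2,\xi')$, and $\nu(\vec{\alpha}^0,\xi) = \nu(\vec{\alpha}^2,\xi')$; the last two yield $\nu(\vec{\alpha}^1,\xi) = \nu(\vec{\alpha}^0,\xi)$, and combining with the first gives $\nu(\vec{\alpha}^1,\xi) = \nu(\vec{\alpha}^1,\xi')$, whence $\xi = \xi'$ by injectivity of the enumeration $\nu(\vec{\alpha}^1,\cdot)$ — a contradiction. Therefore every identification in the homogeneous pattern is between equal positions, so for all $\vec{\alpha},\vec{\beta} \in \prod_{i<\delta} H_i$ each common domain element has matching positions, and by the first paragraph the conditions $p_{\vec{\alpha}}$ are pairwise compatible.

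I expect the main obstacle to be the normalization and bookkeeping rather than the positional computation, which is short. Keeping the number of colors strictly below $\kappa$ forces a uniform bound $\rho$ on the domain order types, and the delicate point is securing such a bound while retaining a genuine product substructure: note the mild circularity that a homogeneous set automatically has constant (hence bounded) domain order type, yet the bound is needed \emph{before} the partition relation can be invoked, so the bounding must be extracted separately from the regularity of $\kappa$. Equally fiddly is organizing a single coloring that simultaneously controls all coincidence patterns $S$ and all orientations $\epsilon$, so that the one homogeneous product set certifies compatibility for every pair in $\prod_{i<\delta}H_i$ rather than only for the ``nested'' pairs; for infinite $\delta$ the order-type bookkeeping around $\delta \cdot 2$ and the (possibly unbounded) family of orientations would require additional care, which is presumably where the stronger partition hypotheses enter.
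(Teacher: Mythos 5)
Your proposal is correct and follows essentially the same route as the paper: the paper colors $\delta\cdot 2$-sized subsets of $\kappa$ by the relative ordering of the two associated domains, simultaneously for every ``acceptable'' interleaving pattern $\iota$ (these play exactly the role of your patterns $(S,\epsilon)$), applies the partition relation once to the combined coloring, cuts the homogeneous set into $\delta$ consecutive blocks, and then derives compatibility from precisely your three-tuple computation showing that any positional identification $\nu(\cdot,\xi)=\nu(\cdot,\xi')$ propagated by homogeneity forces $\xi=\xi'$. The normalization you flag as the main obstacle is not treated as one in the paper: there, image homogenization is read as giving all domains a common order type $\theta<\kappa$, so each coloring uses at most $|{}^{\theta\cdot 2}(\theta\cdot 2)|+2<\kappa$ colors by strong inaccessibility, which is exactly the bound your $\rho$ was meant to supply.
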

\begin{proof}
Consider a function
 $\iota$ from $\delta \cdot 2$ to $\delta \cdot 2$ such that
 $\iota \restriction [0,\delta)$ is an increasing
 sequence of elements in $\delta \cdot 2$ and
 $\iota \restriction [\delta,\delta \cdot 2)$
 is an increasing
 sequence of elements in $\delta \cdot 2$.
Given a set $A \in [\textrm{Ord}]^{\delta \cdot 2}$,
 let $\iota[A] := \langle
 \alpha_{\iota(i)} : i < \delta \cdot 2 \rangle$,
 where $A$ enumerated in increasing order is
 $\langle \alpha_i : i < \delta \cdot 2 \rangle$.
If $\vec{\alpha}, \vec{\beta} \in [\textrm{Ord}]^\delta$
 are such that
 $(\exists A)\, \iota[A] =
 \langle \vec{\alpha}, \vec{\beta} \rangle$,
 then we say that $\vec{\alpha}$ is
 $\iota$-\textit{related} to $\vec{\beta}$.
The idea is that $\iota$ codes the
 relative ordering of the elements
 of $\vec{\alpha}$ and $\vec{\beta}$.
Given any $\vec{\alpha}, \vec{\beta}
 \in [\textrm{Ord}]^\delta$,
 there is some $\iota$ such that
 $\vec{\alpha}$ is $\iota$-related to $\vec{\beta}$.

Call a function $\iota$
 from $\delta \cdot 2$ to $\delta \cdot 2$
 \textit{acceptable} iff for all $i < \delta$,
 $$\iota(i), \iota(\delta+i)
 \in \{2 \cdot i, 2 \cdot i + 1\}.$$
The idea is that when we have our final sequence
 $\langle H_i : i < \delta \rangle$,
 if $\vec{\alpha}, \vec{\beta}
 \in \prod_{i < \delta} H_i$,
 then $\vec{\alpha}$ will be $\iota$-related to $\vec{\beta}$
 for some $\iota$ that is acceptable.
Also, as can be easily verified by the reader,
 given any acceptable $\iota$,
 there are $\vec{\alpha}, \vec{\beta}, \vec{\sigma}
 \in \prod_{i < \delta}\, H_i$
 such that
 $\vec{\alpha}$ is $\iota$-related to $\vec{\beta}$,
 $\vec{\alpha}$ is $\iota$-related to $\vec{\sigma}$, and
 $\vec{\beta}$ is $\iota$-related to $\vec{\sigma}$.

For an acceptable $\iota :
 \delta \cdot 2 \to \delta \cdot 2$,
 let $c_\iota$ be a coloring of
 subsets of $\kappa$ of size $\delta \cdot 2$
 which encodes the following information.
Given $A \in [\kappa]^{\delta \cdot 2}$,
 let $\vec{\alpha}$ and $\vec{\beta}$ be such that
 $\iota[A] = \langle \vec{\alpha}, \vec{\beta} \rangle$
 (so $\vec{\alpha}$ is $\iota$-related to $\vec{\beta}$).
The value of $c_\iota(A)$ should encode
\begin{itemize}
\item[1)] the relative ordering of the elements of
 $\dom(p_{\vec{\alpha}})$ and 
 $\dom(p_{\vec{\beta}})$, and finally
\item[2)] whether or not
 $p_{\vec{\alpha}}$ and $p_{\vec{\beta}}$
 are compatible.
\end{itemize}
This coloring $c_\iota$ can be seen to use strictly
 less than $\kappa$ colors.
In fact, since the conditions $p_{\alpha}$, $\alpha\in[\kappa]^{\delta}$ are homogenized,  their domains have the same size, say $\theta<\kappa$; so such a $c_{\iota}$ may be seen to use  at most $|{}^{\theta\cdot 2}(\theta\cdot 2)| +2$ colors.
Define $c(A)$ to be the sequence of $c_{\iota}(A)$, where $\iota$ ranges over all acceptable $\iota$, and notice that $c$ uses less than $\kappa$ many colors.
Now fix $\gamma < \kappa$.
By our assumption that
 $$(\forall \mu_1, \mu_2 < \kappa)\,
 \kappa \rightarrow (\mu_1)_{\mu_2}^{\delta \cdot 2},$$
there is  a set $H \subseteq \kappa$ of ordertype at least
 $\gamma \cdot \delta$ which is homogeneous for $c$,
and hence,
 for each  $c_\iota$.

Partition $H$ into $\delta$ pieces of size $\ge \gamma$
 that are not interleaved
 (if $P_1$ and $P_2$ are two pieces, then
 if one element of $P_1$ is less than one element of $P_2$,
 then all elements of $P_1$ are less than all elements of $P_2$).
Let the partition be
 $\langle H_i : i < \delta \rangle$.
We claim that for any two $\vec{\zeta}, \vec{\eta} \in
 \prod_{i < \delta} H_i$,
 $p_{\vec{\zeta}}$ and $p_{\vec{\eta}}$ are compatible.
Suppose, towards a contradiction,
 that there are $\vec{\zeta}$ and $\vec{\eta}$
 such that $p_{\vec{\zeta}}$ and $p_{\vec{\eta}}$ are
 incompatible.
There is an $\iota$ such that
 $\vec{\zeta}$ is $\iota$-related to $\vec{\eta}$ and
 $\iota$ is acceptable.
Since $p_{\vec{\zeta}}$ and $p_{\vec{\eta}}$ are incompatible,
 fix $\sigma_1 \not= \sigma_2$ such that
 $\nu(\vec{\zeta}, \sigma_1) = \nu(\vec{\eta}, \sigma_2)$ but
 $p_{\vec{\zeta}}(\nu(\vec{\zeta}, \sigma_1)) \not=
  p_{\vec{\eta}}(\nu(\vec{\eta}, \sigma_2))$.
The fact that it must be that $\sigma_1 \not= \sigma_2$
 follows from the fact that the $p_{\vec{\alpha}}$ are image homogenized.
Since whether or not
 $p_{\vec{\zeta}}$ and $p_{\vec{\eta}}$
 are compatible was encoded into
 $c_\iota$, any two $\vec{\alpha}, \vec{\beta} \in
 \prod_{i < \delta} H_i$ such that
 $\vec{\alpha}$ is $\iota$-related to $\vec{\beta}$
 will be such that $p_{\vec{\alpha}}$ and $p_{\vec{\beta}}$
 are incompatible.
Since $\iota$ is acceptable, fix
 $\vec{\alpha}, \vec{\beta}, \vec{\sigma} \in
 \prod_{i < \delta} H_i$ such that
 $\vec{\alpha}$ is $\iota$-related to $\vec{\beta}$,
 $\vec{\alpha}$ is $\iota$-related to $\vec{\sigma}$, and
 $\vec{\beta}$ is $\iota$-related to $\vec{\sigma}$.
By 1) of the definition of $c_\iota$
 and the fact that the conditions are image homogenized, we have
 $\nu(\vec{\alpha}, \sigma_1) = \nu(\vec{\beta}, \sigma_2)$.
Also, $\nu(\vec{\alpha}, \sigma_1) = \nu(\vec{\sigma}, \sigma_2)$
 and $\nu(\vec{\beta}, \sigma_1) = \nu(\vec{\sigma}, \sigma_2)$.
Thus, $\nu(\vec{\alpha}, \sigma_1) = \nu(\vec{\beta}, \sigma_1)$.
Now we have
 $\nu(\vec{\beta}, \sigma_1) = \nu(\vec{\beta}, \sigma_2)$,
 which is impossible because $\nu$ is an
 increasing enumeration of the elements of
 $\dom(p_{\vec{\beta}})$ and $\sigma_1 \not= \sigma_2$.
\end{proof}

Notice that 1) and 2) of the hypothesis
 of the theorem follow from
 $\lambda \rightarrow (\kappa)^{\delta \cdot 2}_\kappa$.
The conclusion of this theorem
 clearly implies $\mbox{SDHL}(\delta, \sigma, \kappa)$, and hence also $\mbox{HL}(\delta, \sigma, \kappa)$ by Theorem \ref{thm.SDHLimpliesHL}.
We point out that this proves the analogue for trees on $\kappa$ of the asymmetric Dense Set Version, Theorem 8.15 in \cite{Farah/TodorcevicBK}, for trees on $\om$.

\begin{theorem}\label{thm.generalHL}
Let $\kappa$ be an infinite cardinal, $\delta < \kappa$ a non-zero ordinal, and  $\lambda > \kappa$ a cardinal for which the following partition relations hold:
\begin{itemize}
\item[1)]
 $\lambda \rightarrow (\kappa)^{\delta}_\kappa$, and
\item[2)]
$ \kappa \rightarrow (\mu_1)_{\mu_2}^{\delta \cdot 2}$, 
 for all pairs $\mu_1, \mu_2 < \kappa$.
\end{itemize}
Assume that $\kappa$ is either $\omega$ or a measurable cardinal
 in the forcing extension to add $\lambda$ many 
 Cohen subsets of $\kappa$.
Let $T_i \subseteq {^{<\kappa}\kappa}$
 for $i < \delta$
 be regular trees, and
let $\sigma < \kappa$ be non-zero and
 $$c : \bigotimes_{i < \delta} T_i \to \sigma.$$

Then  
there are $\zeta < \kappa$
 and $\langle t_i \in T_i(\zeta) : i < \delta \rangle$,
 such that for all
 $\zeta'$ satisfying
 $\zeta \le \zeta' < \kappa$,
 there are $\zeta''$ satisfying
 $\zeta' \le \zeta'' < \kappa$
 and $\langle X_i \subseteq T_i(\zeta'') : i < \delta \rangle$
 such that, for each  $i < \delta$, $X_i$
 dominates
 $T_i(\zeta') \cap
 \textnormal{Cone}(t_i)$, and 
 $$|c`` \bigotimes_{i < \delta} X_i | = 1.$$
Furthermore, we can ensure that if
 $c``\bigotimes_{i < \delta} X_i = \{0\}$, then
 $(\forall i < \delta)\, t_i = \emptyset$.

In particular,  $\mbox{HL}(\delta, \sigma, \kappa)$  holds.
\end{theorem}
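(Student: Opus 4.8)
The plan is to run a Harrington-style forcing argument, using the measurability of $\kappa$ to pass from the colors of $c$ along entire branches to a single reflected color, and using the two partition relations to glue together compatible forcing conditions. First I would force with $\mathbb{P} = \mathrm{Add}(\kappa,\lambda)$ and work in the extension $V[G]$, where by hypothesis $\kappa$ is measurable (the case $\kappa=\omega$ being the classical Harrington argument). Fix a normal measure $U$ on $\kappa$ with ultrapower embedding $j : V[G] \to M$, $\mathrm{crit}(j)=\kappa$. Since $\mathbb{P}$ is $<\kappa$-closed, the trees $T_i$, their level product, and the coloring $c$ are unchanged, and $c$ extends to $j(c)$ on $\bigotimes_{i<\delta} j(T_i)$ with $j(\sigma)=\sigma$. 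Fix in $V$ strong embeddings $e_i : {}^{<\kappa}2 \to T_i$, so that each Cohen subset $b_\alpha$ (for $\alpha<\lambda$) added by $G$ determines a cofinal branch $\beta_{i,\alpha}$ through $T_i$; under $j$ this branch corresponds to a node $\hat\beta_{i,\alpha} \in j(T_i)(\kappa)$ at level $\kappa$. For an increasing tuple $\vec\alpha \in [\lambda]^\delta$, assign $\alpha_i$ to the tree $T_i$ and let the generic color be $C(\vec\alpha) := j(c)(\langle \hat\beta_{i,\alpha_i} : i<\delta\rangle) \in \sigma$; a standard ultrapower computation identifies this with the $U$-almost-everywhere value of $\xi \mapsto c(\langle \beta_{i,\alpha_i}\re\xi : i<\delta\rangle)$, so it is realized at a measure-one, hence cofinal, set of genuine levels $\xi<\kappa$.

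Next I would convert this into a coloring of $[\lambda]^\delta$ to which hypothesis (1) applies. For each $\vec\alpha$ I would select, in $V$, a condition $p_{\vec\alpha} \in \mathbb{P}$ together with data recording a decided generic color and the order-and-value pattern of $\mathrm{dom}(p_{\vec\alpha})$ relative to $\vec\alpha$, and color $\vec\alpha$ by this pattern. Applying $\lambda \to (\kappa)^\delta_\kappa$ yields a homogeneous $I \in [\lambda]^\kappa$ on which all the $p_{\vec\alpha}$ share one color $c^*$ and are image homogenized in the sense defined before Lemma~\ref{key_combo_lemma}. These conditions need not yet be pairwise compatible, because a coordinate shared by $p_{\vec\alpha}$ and $p_{\vec\beta}$ may sit at mismatched position indices in the two domains, forcing conflicting bits. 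This is exactly what Lemma~\ref{key_combo_lemma} repairs: since $\kappa \to (\mu_1)^{\delta\cdot 2}_{\mu_2}$ holds for all $\mu_1,\mu_2<\kappa$ by hypothesis (2), I obtain, for any prescribed $\gamma<\kappa$, non-interleaved sets $H_0,\ldots,H_{\delta-1} \subseteq I$ of order type $\ge\gamma$ such that the $p_{\vec\alpha}$ for $\vec\alpha \in \prod_{i<\delta} H_i$ are pairwise compatible. As $\mathbb{P}$ is $<\kappa$-closed and there are fewer than $\kappa$ such conditions, their union is a single condition $q$ forcing that every branch tuple indexed by $\prod_{i<\delta} H_i$ has generic color $c^*$.

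The final and most delicate step is to descend from the level-$\kappa$ generic color back to honest dense monochromatic level sets below $\kappa$, inside $V$. For each target level $\zeta'$ I would use that $c^*$ is attained on a measure-one, hence cofinal, set of actual levels, and that the branches $\{\beta_{i,\alpha} : \alpha \in H_i\}$, being images of enough Cohen branches, can be arranged to dominate $T_i(\zeta')\cap\Cone(t_i)$ at a common later level $\zeta''$, where $X_i := \{\beta_{i,\alpha}\re\zeta'' : \alpha \in H_i\}$ is the desired dense level set; $<\kappa$-closure returns these level sets to $V$. The common root $\vec t=\langle t_i:i<\delta\rangle$ at a fixed level $\zeta$ is read off from where the selected branches first agree, and the reflection guarantees the same $\vec t$ works for cofinally many $\zeta'$. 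For the furthermore clause I would treat color $0$ separately at the outset, in the manner of Fact~\ref{fact.1treeZFC}: if color $0$ is realizable on dense matrices rooted at the empty nodes for cofinally many levels, take $\vec t=\langle\emptyset:i<\delta\rangle$ and $c^*=0$; otherwise pass to a subcone on which this fails and run the argument above with a genuinely nontrivial root.

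The conclusion so obtained is exactly the hypothesis of Lemma~\ref{expanding_one_line} and plainly yields $\mathrm{SDHL}(\delta,\sigma,\kappa)$, whence $\mathrm{HL}(\delta,\sigma,\kappa)$ by Theorem~\ref{thm.SDHLimpliesHL}. I expect the main obstacle to be precisely this third step: keeping the coloring of $[\lambda]^\delta$ genuinely definable in $V$, so that hypothesis (1) can be invoked, even though the color it encodes refers to behavior along entire branches, and then converting the single measure-one color $c^*$ into dense matrices at cofinally many honest levels $\zeta''<\kappa$ sharing one root $\vec t$. Coordinating the $<\kappa$-closure of $\mathbb{P}$, the $\kappa$-completeness of $U$, and the two partition relations so that all three pull in the same direction is where the real difficulty lies.
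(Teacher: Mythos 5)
Your outline does follow the paper's overall Harrington-style strategy — conditions $p_{\vec\alpha}$ indexed by $[\lambda]^\delta$ deciding a $\dot{\mathcal{U}}$-almost-everywhere color, hypothesis (1) to homogenize their patterns, and Lemma~\ref{key_combo_lemma} to extract a pairwise-compatible family — but the two places where the real work happens contain genuine gaps. First, the setup via fixed embeddings $e_i \colon {}^{<\kappa}2 \to T_i$ breaks the final (domination) step. A ``strong'' embedding of ${}^{<\kappa}2$ into $T_i$ need not exist at all: by Definition~\ref{def.strongsubtree} a strong subtree must contain \emph{every} immediate $T_i$-successor of its nodes at witness levels, which is impossible when $T_i$ branches more than binarily. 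And even where such an image exists, it is not dense in $T_i$: any node of $T_i$ that splits away from the image at a non-witness level has no extension inside the image, so no branch $\beta_{i,\alpha}$ confined to the image can ever extend it. Consequently $X_i = \{\beta_{i,\alpha}\re\zeta'' : \alpha \in H_i\}$ can never dominate $T_i(\zeta')\cap\Cone(t_i)$, which is a statement about \emph{all} nodes of $T_i$ there, no matter how the branches are ``arranged.'' The paper avoids this by presenting the forcing with tree-valued conditions ($p(\alpha,i)\in T_i$; equivalent to Add$(\kappa,\lambda)$ because the $T_i$ are regular), so that a generic branch can be steered through an arbitrary node. The steering itself is exactly the mechanism your sketch asserts (``can be arranged'') but never supplies: assign injectively to each $t \in S_i := T_i(\zeta')\cap\Cone(t_i)$ an index $\alpha_{i,t}\in H_i$, and extend the union of the compatible conditions $p_{\vec\alpha}$, $\vec\alpha\in\prod_{i<\delta}A_i$, by requiring $p(\alpha_{i,t},i)=t$. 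This is consistent precisely because homogenization fixed the value $p_{\vec\alpha}(\alpha_i,i)=t_i$ for all $\vec\alpha\in[H]^\delta$; that common value \emph{is} the root $t_i$, fixed in the ground model before $\zeta'$ is chosen — it is not ``read off from where the selected branches first agree.'' One then extends once more, using the $\kappa$-completeness of $\dot{\mathcal{U}}$ over the fewer than $\kappa$ many tuples involved, to decide a single level $\zeta''\ge\zeta'$ and, for each $t\in S_i$, a node $\tilde{t}\in T_i(\zeta'')$ with $\tilde{t}\sqsupseteq t$ through which the branch at index $\alpha_{i,t}$ is forced to pass; the sets $X_i=\{\tilde{t}: t\in S_i\}$ are then ground-model objects, dominate $S_i$, and are monochromatic by absoluteness of $c$.

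Second, your treatment of the ``furthermore'' clause does not work as stated. In your ``otherwise'' case nothing prevents the construction from outputting color $0$ with nonempty roots: the matrices the argument produces dominate only cones, not whole levels, so they never contradict your case hypothesis (failure of color $0$ on matrices rooted at the empty nodes), and no step of your argument ties the homogeneous color to that case distinction. The paper instead builds the asymmetry into the choice of the pairs $(p_{\vec\alpha},\sigma_{\vec\alpha})$ at the outset: if $1$ forces $\{\zeta<\kappa : c(\langle\dot{G}(\alpha_i,i)\re\zeta : i<\delta\rangle)=0\}\in\dot{\mathcal{U}}$, set $p_{\vec\alpha}=1$ and $\sigma_{\vec\alpha}=0$; otherwise some condition forces the set of levels with color $\ne 0$ to lie in $\dot{\mathcal{U}}$, and $\kappa$-completeness refines it to a condition forcing a single \emph{nonzero} color almost everywhere. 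Homogeneity then guarantees that if the final color $\sigma'$ equals $0$, every $p_{\vec\alpha}$ for $\vec\alpha\in[H]^\delta$ is (essentially) trivial, whence every $t_i=\emptyset$. Without some such device the asymmetric clause — and with it the intended analogue of the asymmetric dense-set version — is not established.
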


\begin{proof}
Note that $\kappa$ is strongly inaccessible.
Let $\mbb{P}$ be the poset to add
 $\delta \times \lambda$ many Cohen subsets of $\kappa$
 presented in the following way:
 $p \in \mbb{P}$ iff $p$ is a size $<\kappa$
 partial function from $\lambda \times \delta$
 to ${^{<\kappa}\kappa}$ and
 for all $(\alpha,i) \in \dom(p)$,
 $p(\alpha,i) \in T_i$.
For the ordering of $\mbb{P}$,
 $q \le p$ iff
 $$\dom(q) \sqsupseteq \dom(p)
 \mbox{ and } (\forall x \in \dom(p))\,
 q(x) \sqsupseteq p(x).$$
If each $T_i = {^{<\kappa} \kappa}$,
 then it is trivial to see that this forcing
 is equivalent to adding $\lambda$
 Cohen subsets of $\kappa$.
If not, then as long as each $T_i$
 does not have any isolated branches
 or leaf nodes (which we assume), then because
 no path leaves tree $T_i$ at a limit level
 $< \kappa$,
 one shows that the forcings are still equivalent.
Without loss of generality,
 $(\forall p \in \mbb{P})$
 the size of $\dom(p) \cap (\lambda \times \{i\})$
 does not depend on $i$.
Similarly, we may assume
 $(\forall p \in \mbb{P})\,$
 the length of the sequence $p(x)$
 does not depend on $x \in \dom(p)$.

Let $\dot{G}$ be the canonical name for the
 generic object (as opposed to the generic filter).
In particular,
 $1 \forces \dot{G} :
 \check{\lambda} \times \check{\delta}
 \to {^{\check{\kappa}} \check{\kappa} }$.
Since
 $1 \forces (\check{\kappa}$ is a measurable cardinal$)$,
 let $\dot{\mc{U}}$ be such that
 $1 \forces (\dot{\mc{U}}$ is a $\check{\kappa}$-complete
 ultrafilter on $\check{\kappa}$).

For each $\vec{\alpha} \in [\lambda]^\delta$,
 let $p_{\vec{\alpha}} \in \mbb{P}$ be a condition
 and $\sigma_{\vec{\alpha}} < \sigma$ be a color such that
 if $\langle \alpha_i : i < \delta \rangle$
 is the increasing enumeration of $\vec{\alpha}$, then 
 $$p_{\vec{\alpha}} \forces
 \{ \zeta < \check{\kappa} :
 c( \langle
 \dot{G}(\alpha_i,i) \restriction \zeta :
 i < \delta \rangle ) =
 \check {\sigma}_{\vec{\alpha}} \} \in \dot{\mc{U}}.$$
We may assume that if $\sigma_{\vec{\alpha}} = 0$, then
 $p_{\vec{\alpha}} = 1$.
This is because if $1$ does not force the color
 of $\dot{\mc{U}}$ many levels to be $0$, then
by the nature of the forcing relation
 and since $\dot{\mc{U}}$ is an ultrafilter,
 there is some condition $p$ such that
$$p \forces
 \{ \zeta < \check{\kappa} :
 c( \langle
 \dot{G}(\alpha_i,i) \restriction \zeta :
 i < \delta \rangle ) \not= 0 \} \in \dot{\mc{U}}.$$
This will establish the last sentence of the statement
 of the theorem.
Then, since $\sigma < \kappa$ and
 $\dot{\mc{U}}$ is $\kappa$-complete (in the extension),
 there is $p' \le p$ and $\sigma \not= 0$ such that
 $$p' \forces
 \{ \zeta < \check{\kappa} :
 c( \langle
 \dot{G}(\alpha_i,i) \restriction \zeta :
 i < \delta \rangle ) = \sigma \} \in \dot{\mc{U}}.$$
Then set $p_{\vec{\alpha}} = p'$ and
 $\sigma_{\vec{\alpha}} = \sigma$ and we are done.
We may also assume, by possibly making the conditions
 $p_{\vec{\alpha}}$ stronger,
 that for each
 $\vec{\alpha} \in [\lambda]^\delta$
 and $i < \delta$, that
 $$(\alpha_i,i) \in \dom(p_{\vec{\alpha}}).$$
There is a coloring
 $\tilde{c} : [\lambda]^\delta \to \kappa$
 such that after we apply the partition relation
 $\lambda \rightarrow (\kappa)^\delta_\kappa$,
 we get a set $H \in [\lambda]^\kappa$
 such that the following are satisfied:
\begin{itemize}
\item[1)] the sequence
 $\langle(\xi, p_{\vec{\alpha}}(\nu(\vec{\alpha},\xi))) :
 \xi < \mbox{ot}(\dom(p_{\vec{\alpha}})) \rangle$
 does not depend on $\vec{\alpha} \in [H]^\delta$
 and therefore the set of $p_{\vec{\alpha}}$ for $\vec{\alpha} \in [H]^\delta$
 are image homogenized,
\item[2)] the value $\sigma_{\vec{\alpha}}$ does not depend on $\vec{\alpha} \in [H]^\delta$, and
\item[3)] for a fixed $i < \delta$, the sequence
 $\langle p_{\vec{\alpha}}(\alpha_i,i) \in T_i
 : i < \delta \rangle$
 does not depend on
 $\vec{\alpha} \in [H]^\delta$.
\end{itemize}
Such a coloring $\tilde{c}$ may be defined as follows.
For $\vec{\al}\in[\lambda]^{\delta}$,
define $\tilde{c}(\vec\al)$ to be the triple  sequence
consisting of the following three sequences:
the sequence from 1);
 the value $\sigma_{\vec\al}$   that $p_{\vec\al}$ forces $c(\lgl \dot{G}(\al_i,i)\re\zeta:i<\delta\rgl)$ to equal for $\dot{\mathcal{U}}$ many levels $\zeta$;
and the sequence
$\lgl \xi_i:i<\delta\rgl$,
where $\xi_i$ is the ordinal such  that 
$\al_i$ appears as the $\xi_i$-th member 
 of 
$\mbox{ot}(\dom(p_{\vec{\alpha}}))$.

For each $i < \delta$,
 let $t_i \in {^{<\kappa}\kappa}$ be the unique value
 of the $p_{\vec{\alpha}}(i,\alpha_i)$'s
 for $\vec{\alpha} =
 \langle \alpha_i : i < \delta \rangle
 \in [H]^\delta$.
Let $\zeta$ be the common length of each
 $t_i$.
Let $\sigma' < \sigma$ be the unique value of
 $\sigma_{\vec{\alpha}}$ for
 $\vec{\alpha} \in [H]^\delta$.
Note that if $\sigma' = 0$,
 then each $p_{\vec{\alpha}}$ for
 $\vec{\alpha} \in [H]^\delta$ equals $1$,
 so $(\forall i < \delta)\, t_i = \emptyset.$

Now pick an arbitrary $\zeta'$ satisfying
 $\zeta \le \zeta' < \kappa$.
Let $\gamma < \kappa$ be a cardinal such that for each
 $i < \delta$, the cardinality of the set
 $T(\zeta') \cap \mbox{Cone}(t_i)$
 is $\le \gamma$.
Now apply Lemma~\ref{key_combo_lemma}
 to get a sequence
 $\langle H_i \subseteq H : i < \delta \rangle$
 such that
 $(\forall i < \delta)\, \mbox{ot}(H_i) \ge \gamma$,
 $(\forall i < j < \delta)$ every element of
 $H_i$ is less than every element of $H_j$, and
 the conditions $p_{\vec{\alpha}}$ for $\vec{\alpha}
 \in \prod_{i < \delta} H_i$ are pairwise compatible.
To apply this lemma, we needed hypothesis 2) of this theorem
 (to hold in the ground model).
We could instead apply Lemma~\ref{key_combo_lemma}
 in the forcing extension as long as 2) holds in the extension,
 and the sequence $\langle H_i : i < \delta \rangle$
 would be in the ground model because the forcing is
 ${{<}\kappa}$-closed.

For each $i < \delta$,
 let $S_i := T_i(\zeta') \cap \textrm{Cone}(t_i)$,
 the set of successors of $t_i$ in $T_i$.
For each $i < \delta$ and $t \in S_i$,
 pick $\alpha_{i,t} \in H_i$ so that
 every element of $S_i$
 is mapped to a unique element of $H_i$.
This is possible because the cardinality of
 $S_i$ is $\le \gamma$
 and $\gamma$ is $\le$ the ordertype of $H_i$.
For $i < \delta$,
 let $A_i :=
 \{ \alpha_{i,t} :
 t \in S_i \}
 \subseteq H_i$.
Let $\mc{X} \subseteq \prod_{i < \delta} H_i$
 be the set
 $\mc{X} := \prod_{i < \delta} A_i$.
The conditions $p_{\vec{\alpha}}$ for
 $\vec{\alpha} \in \mc{X}$ are pairwise compatible.

Let $p \in \mbb{P}$ be a condition which extends
 $\bigcup \{ p_{\vec{\alpha}} :
 \vec{\alpha} \in \mc{X} \}$
 and
 for all $i < \delta$
 and $t \in S_i$,
 $p(\alpha_{i,t},i) = t.$
To see that there exists such a $p$,
 first note that
 $|\mc{X}| < \kappa$ and
 the conditons $p_{\vec{\alpha}} \in \mc{X}$
 are pairwise compatible,
 therefore by the nature of $\mbb{P}$,
 $\bigcup_{\vec{\alpha} \in \mc{X}} p_{\vec{\alpha}}$
 is an element of $\mbb{P}$.
Second,
 since $(\forall \vec{\alpha} \in \mc{X})
 (\forall i < \delta)\,
 p_{\vec{\alpha}}(\alpha_i,i)
 = t_i$,
 we may define $p$ so that
 $p(i,\alpha_{i,t}) = t$
 for all $i < \delta$ and $t \in S_i$
 and this will not clash with the $p_{\vec{\alpha}}$.

Since $|\mc{X}| < \kappa$,
 $1 \forces (\dot{\mc{U}}$ is a $\check{\kappa}$-complete ultrafilter$)$,
 and by the hypothesis on each pair
 $(p_{\vec{\alpha}}, \sigma_{\vec{\alpha}})$,
 we have that
 $p \forces$ there are arbitrarily large
 levels $\zeta'' < \check{\kappa}$ such that for each
 $\vec{\alpha} \in \check{\mc{X}}$, we have
 $$\check{c}( \langle
  \dot{G}(\alpha_i,i) \restriction \zeta''
  : i < \check{\delta} \rangle )
 = \check{\sigma}'.$$
We may extend $p$ to a condition
 $p' \le p$ as well as get a level
 $\zeta'' \in [\zeta',\kappa)$
 such that the following are satisfied:
\begin{itemize}
\item[1)]
 $p' \forces$
 for each
 $\vec{\alpha} =
 \langle \alpha_i : i < \delta \rangle
 \in \check{\mc{X}}$, we have
$$\check{c}( \langle
  \dot{G}(\alpha_i,i) \restriction \zeta'' :
  i < \check{\delta} \rangle )
 = \check{\sigma}'.$$
\item[2)] for each
 $i < \delta$ and each
 $t \in S_i$,
 there is a unique $\tilde{t}
 \in T_i(\zeta'')$ such that
 $p' \forces \dot{G}(\check{\alpha}_{i,t}, \check{i})
 \sqsupseteq \check{\tilde{t}}$.
\end{itemize}
For each $i < \delta$,
 let $$X_i := \{ \tilde{t} : t \in S_i \}.$$
We have that each $X_i$
 dominates $S_i$ and
 $$c `` \bigotimes_{i<\delta} X_i = \{ \sigma' \}$$
 (because the coloring $c$ is in the ground model,
 so it is absolute).
\end{proof}

The following theorem of \Erdos\ and Rado provides  many examples  of cardinals $\lambda$ which 
satisfy the hypothesis in Theorem \ref{thm.generalHL}, 
 when  $\delta$  is finite.

\begin{theorem}[\Erdos-Rado, \cite{Erdos/Rado56}]\label{thm.ER}
For $r\ge 0$ finite and $\kappa$ an infinite cardinal,
$\beth_r(\kappa)^+\ra(\kappa^+)^{r+1}_{\kappa}$.
\end{theorem}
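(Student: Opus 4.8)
The plan is to prove the \Erdos--Rado relation $\beth_r(\kappa)^+ \to (\kappa^+)^{r+1}_\kappa$ by induction on the finite parameter $r$, following the classical ramification method. The base case $r=0$ is immediate: a coloring of $\beth_0(\kappa)^+ = \kappa^+$ (that is, of the singletons) into $\kappa$ colors cannot have every color class of size $\le\kappa$, since $\kappa^+$ is regular and is not a union of $\kappa$ many sets of size $\le\kappa$; hence some class has size $\kappa^+$. For the inductive step I fix a coloring $F : [\beth_r(\kappa)^+]^{r+1} \to \kappa$ and set $\mu = \beth_r(\kappa)^+$ and $\theta = \beth_{r-1}(\kappa)^+$. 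The strategy is to first extract a large \emph{end-homogeneous} set $H \subseteq \mu$ --- meaning $|H| = \theta$ and, for every $s \in [H]^r$ and all $z, z' \in H$ above $\max(s)$, $F(s \cup \{z\}) = F(s \cup \{z'\})$ --- and then to collapse the top coordinate.

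To build $H$, I would run a transfinite recursion of length $\theta$ producing points $a_\xi$ and a decreasing chain of candidate sets $A_\xi \subseteq \mu$ with $A_0 = \mu$. At stage $\xi$ put $a_\xi = \min A_\xi$ and partition $A_\xi \setminus \{a_\xi\}$ according to the \emph{type} of a point $x$ over $P_\xi := \{a_\eta : \eta \le \xi\}$, namely the function $s \mapsto F(s \cup \{x\})$ for $s \in [P_\xi]^r$. Since $|P_\xi| \le \beth_{r-1}(\kappa)$, there are at most $\kappa^{|[P_\xi]^r|} \le \kappa^{\beth_{r-1}(\kappa)} = \beth_r(\kappa)$ such types, and this is $< \mu$; choosing $A_{\xi+1}$ to be a type-class of size $\mu$ (possible because $\mu$ is regular and $\theta \le \beth_r(\kappa) < \mu$) guarantees that all points of $A_{\xi+1}$ share the chosen type, which is exactly what end-homogeneity requires at later stages. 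At limit stages $\lambda$ one takes the coherent union of the types chosen so far and lets $A_\lambda$ be the corresponding candidate set. After the recursion, $H := \{a_\xi : \xi < \theta\}$ is end-homogeneous of size $\theta$.

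With $H$ in hand, end-homogeneity lets me define $F' : [H]^r \to \kappa$ by $F'(s) := F(s \cup \{z\})$ for any $z \in H$ with $z > \max(s)$; this is well-defined. The induction hypothesis $\beth_{r-1}(\kappa)^+ \to (\kappa^+)^r_\kappa$, applied to $F'$ on the index set $H$ of size $\theta = \beth_{r-1}(\kappa)^+$, yields a set $K \subseteq H$ with $|K| = \kappa^+$ that is homogeneous for $F'$. Then $K$ is homogeneous for $F$ as well: for any $u \in [K]^{r+1}$, writing $z = \max(u)$ and $s = u \setminus \{z\}$, we get $F(u) = F'(s)$, which is constant across $[K]^{r+1}$. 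This completes the induction.

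I expect the main obstacle to be the end-homogeneous construction, and specifically maintaining $|A_\xi| = \mu$ through the limit stages: the successor step only controls the size of a single type-class, so a naive intersection of the chosen classes along a limit need not retain size $\mu$. The delicate point is to choose the branch of types coherently so that at each limit $\lambda < \theta$ the union of the previously chosen types is again realized $\mu$ many times; this is where the regularity of $\mu = \beth_r(\kappa)^+$ and the bound $\kappa^{\beth_{r-1}(\kappa)} = \beth_r(\kappa) < \mu$ must be used together, via a pruning argument on the tree of $\mu$-realized types. Everything else --- the base case, the reduction to $F'$, and the transfer of homogeneity from $F'$ to $F$ --- is routine.
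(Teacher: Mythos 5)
The paper itself gives no proof of this theorem---it is quoted from \Erdos\ and Rado---so your attempt can only be judged on its own terms. The skeleton you chose is the right one (induction on $r$, extraction of an end-homogeneous set of size $\beth_{r-1}(\kappa)^+$, then collapsing the top coordinate via the induced coloring $F'$), and the routine parts are indeed correct: the base case, the well-definedness of $F'$, and the transfer of homogeneity from $F'$ back to $F$. But there is a genuine gap exactly where you flag it, and it sits at the combinatorial heart of the theorem rather than at its periphery. The recursion as you set it up cannot survive limit stages: a strictly decreasing sequence of size-$\mu$ subsets of $\mu$ can have empty intersection after only $\omega$ steps (partition $\mu$ into pieces $P_n$, $n<\omega$, each of size $\mu$, and let $A_n=\bigcup_{m\ge n}P_m$), and no choice of a large type-class at successor stages can rule this out, because the classes available at a limit level refine \emph{all} earlier choices, and it may happen that every class extending the branch you committed to is small. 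The successor-stage pigeonhole gives no control over which branches of the type tree are realized $\mu$ many times, so ``choose coherently'' is not an instruction one can follow stage by stage; the proposal proves the successor steps of a recursion whose limit steps are the actual content of the theorem.

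The classical repair (Erd\H{o}s--Rado's ramification argument) is to drop the requirement that classes be large and instead build the \emph{entire} tree of type-classes canonically, locating the good branch by a single global count at the end. Write $\beta:=\beth_{r-1}(\kappa)$, so $\theta=\beta^+$ and $\mu=(2^\beta)^+$, and note $\kappa\le\beta$. Assign to the root the class $A_\emptyset=\mu$; at a node $t$ with nonempty class $A_t$, let $a_t:=\min A_t$ and split $A_t\setminus\{a_t\}$ into classes according to the type $s\mapsto F(s\cup\{x\})$, $s\in[P_t]^r$, over the set $P_t$ of points chosen along the branch through $t$; at limit levels take intersections. No class is required to be large, and chosen points strictly increase along each branch. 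Each level $\xi<\theta$ has at most $(\kappa^\beta)^{|\xi|}\le 2^\beta$ nodes, so the tree has at most $\theta\cdot 2^\beta=2^\beta=\beth_r(\kappa)<\mu$ nodes in all; since each point of $\mu$ is either the chosen point of a unique node or else traces a branch through all $\theta$ levels, some $x^*$ traces a full branch $b$. The set $H:=\{a_{b\restriction\xi}:\xi<\theta\}$ has size $\theta$ and is end-homogeneous: if $s\in[H]^r$ has $\max(s)=a_{b\restriction\eta}$ and $z\in H$ lies above $\max(s)$, then $z\in A_{b\restriction(\eta+1)}$, so $F(s\cup\{z\})$ is the value prescribed by the type the branch records at $\eta$, independently of $z$. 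From here your reduction to $F'$ and the induction hypothesis apply verbatim. (Equivalently, the same count shows the union over all levels of the small classes and the chosen points has size at most $\beth_r(\kappa)<\mu$, so some point lies in a $\mu$-sized class at every level and its branch is the ``$\mu$-realized'' branch you wanted---but this is found by counting over the whole tree at once, not by the stage-by-stage choices your recursion makes. An alternative modern route fixes a single point outside a suitable elementary submodel closed under $\beta$-sequences and uses it as the oracle for all types, avoiding decreasing sets entirely.)
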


In particular, if GCH holds, 
then for finite $d\ge 2$,
$\kappa^{+d}\ra (\kappa^+)^d_{\kappa}$.

\begin{definition}[\cite{KanamoriBK}]\label{def.gammastrong}
Given a cardinal $\kappa$ and an ordinal $d$,
$\kappa$ is {\em $\kappa+d$-strong} if there is an elementary embedding $j:V\ra M$ with critical point $\kappa$ such that $V_{\kappa+d}= M_{\kappa+d}$.
\end{definition}

\begin{theorem}\label{thm.HLmbl}
Let $d\ge 1$ be any  finite integer and  suppose $\kappa$ is a $\kappa+d$-strong cardinal in a model  $V$ of ZFC satisfying GCH.
Then there is a forcing extension in which $\kappa$ remains measurable and 
 HL$(d,\sigma,\kappa)$ holds, for all $\sigma<\kappa$.
\end{theorem}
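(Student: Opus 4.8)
The plan is to deduce this directly from Theorem \ref{thm.generalHL} by taking $\delta = d$ and $\lambda = \kappa^{+d}$, so that the entire task reduces to verifying the three hypotheses of that theorem. First I would dispatch the two partition relations. Hypothesis (2), namely $\kappa \to (\mu_1)^{\delta\cdot 2}_{\mu_2}$ for all $\mu_1,\mu_2 < \kappa$, holds because a $\kappa+d$-strong cardinal is in particular measurable, hence weakly compact, so it satisfies every finite-exponent partition relation below $\kappa$. Hypothesis (1), $\lambda \to (\kappa)^d_\kappa$, follows from GCH together with the \Erdos--Rado theorem (Theorem \ref{thm.ER}): under GCH one has $\beth_{d-1}(\kappa) = \kappa^{+(d-1)}$, so $\kappa^{+d} \to (\kappa^+)^d_\kappa$, and this is stronger than $\lambda \to (\kappa)^d_\kappa$. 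Both relations hold in $V$, which is the model in which Lemma \ref{key_combo_lemma} is invoked inside the proof of Theorem \ref{thm.generalHL}.

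The substance of the proof is the remaining hypothesis of Theorem \ref{thm.generalHL}: that $\kappa$ is still measurable after forcing with $\mathbb{P} = \mathrm{Add}(\kappa,\lambda)$. This is where the $\kappa+d$-strength is spent, and it is the main obstacle. I would fix an embedding $j : V \to M$ witnessing Definition \ref{def.gammastrong}, so $\mathrm{crit}(j) = \kappa$ and $V_{\kappa+d} = M_{\kappa+d}$, chosen moreover so that $M^{<\kappa} \subseteq M$. The closure of $M$ guarantees that every condition of $\mathbb{P}$ lies in $M$ and that $\mathbb{P}^M = \mathbb{P}$, so $\mathbb{P}$ is a subforcing of $j(\mathbb{P}) = \mathrm{Add}(j(\kappa),j(\lambda))^M$. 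Given $G$ generic for $\mathbb{P}$ over $V$, the goal is an $M$-generic $H$ for $j(\mathbb{P})$ with $j``G \subseteq H$, yielding a lift $\hat{j} : V[G] \to M[H]$ with critical point $\kappa$ whose derived normal measure witnesses measurability of $\kappa$ in $V[G]$. When $j(\kappa) > \lambda$ one can simply force below the master condition $\bigcup_{p \in G} j(p)$; in general, when $j$ comes from a short extender and $j(\kappa)$ need not exceed $\kappa^{+d}$, I would instead build a generic $H$ for $j(\mathbb{P})$ in the usual way and then apply Woodin's surgery, altering $H$ on the at most $\lambda$ coordinates in $j``\lambda$ so as to absorb $j``G$. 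The precise calibration $V_{\kappa+d} = M_{\kappa+d}$ together with GCH, which makes $|V_{\kappa+d}| = \kappa^{+d} = \lambda$, is exactly what bounds the number of conditions disturbed by the surgery and lets the genericity of the altered filter be checked; this is the point at which $\kappa+d$-strong suffices where a naive lift of Harrington's argument would have demanded $\kappa+2d$-strong.

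Once this preservation is in hand, all hypotheses of Theorem \ref{thm.generalHL} hold in $V$, so $\mathrm{HL}(d,\sigma,\kappa)$ holds in $V$ for every $\sigma < \kappa$, while $\kappa$ is measurable in $V$; passing to $V[G]$ keeps $\kappa$ measurable by the lift above. To have $\mathrm{HL}(d,\sigma,\kappa)$ hold inside the extension $V[G]$ I would reapply Theorem \ref{thm.generalHL} there: hypothesis (2) is immediate from the measurability of $\kappa$ in $V[G]$, and the measurable-after-forcing hypothesis reduces to the statement already proved because $\mathrm{Add}(\kappa,\lambda)\times\mathrm{Add}(\kappa,\lambda)\cong\mathrm{Add}(\kappa,\lambda)$, so a further $\mathrm{Add}(\kappa,\lambda)$-extension of $V[G]$ is simply an $\mathrm{Add}(\kappa,\lambda)$-extension of $V$. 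The one point needing attention is that the relevant instance of hypothesis (1) survives into $V[G]$. I expect the lifting-and-surgery step of the second paragraph to be the genuine difficulty, with the partition-relation bookkeeping and the transfer into the extension comparatively routine.
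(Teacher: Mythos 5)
Your reduction to Theorem \ref{thm.generalHL} and the partition-relation bookkeeping are fine, but the step you yourself identify as the heart of the matter --- preserving measurability of $\kappa$ by forcing with $\mathrm{Add}(\kappa,\kappa^{+d})$ \emph{directly over} $V$ --- is not just difficult, it is provably false for $d \ge 2$. Since $\mathrm{Add}(\kappa,\kappa^{+d})$ is ${<}\kappa$-closed, GCH still holds below $\kappa$ in $V[G]$, while $2^\kappa = \kappa^{+d} \ge \kappa^{++}$ there. If $W$ were a normal measure on $\kappa$ in $V[G]$ with ultrapower $j : V[G] \to N$, then elementarity applied to ``$2^\mu = \mu^+$ for all $\mu < \kappa$'' gives $N \models 2^\kappa = \kappa^+$; but $N$ is closed under $\kappa$-sequences in $V[G]$, so $P(\kappa)^N = P(\kappa)^{V[G]}$ and $(\kappa^+)^N = (\kappa^+)^{V[G]}$, and the surjection of $\kappa^+$ onto $P(\kappa)$ existing in $N$ would live in $V[G]$, contradicting $2^\kappa \ge \kappa^{++}$. (This is the standard Gitik-style reflection: a measurable $\kappa$ with $2^\kappa > \kappa^+$ forces GCH to fail on a measure-one set below $\kappa$.) So no surgery argument can produce your lift; Woodin's surgery inherently requires a preparation forcing that breaks GCH cofinally below $\kappa$ first. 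Even for $d=1$, where the counting argument does not apply, the unprepared lift cannot be carried out in general (it fails, for instance, over $L[U]$). Your fallback ``master condition'' case is also broken for a separate reason: $\bigcup_{p \in G} j(p)$ codes $G$, hence is not in $M$ and is not a condition of $j(\mathbb{P})$.

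This missing preparation is exactly what the paper's proof supplies. It first forces with the reverse Easton iteration $\mathbb{P}_\kappa$ of $\mathrm{Add}(\kappa_\alpha,\kappa_\alpha^{+d})$ over the inaccessibles $\kappa_\alpha < \kappa$; in $V[G_\kappa]$, $\kappa$ is still measurable by a standard lift, GCH holds at and above $\kappa$ (as $|\mathbb{P}_\kappa| = \kappa$), so hypothesis (1) follows from \Erdos--Rado and hypothesis (2) from measurability, and the preservation of measurability under the further forcing $\mathrm{Add}(\kappa,\kappa^{+d})$ \emph{over the prepared model} is precisely Woodin's theorem (the paper cites \cite{Friedman/Thompson08} for a proof), which is where the $\kappa+d$-strength of $\kappa$ in $V$ is spent. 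Theorem \ref{thm.generalHL} is then applied in $V[G_\kappa]$, and since that theorem concludes HL in the model satisfying its hypotheses (the Cohen forcing being only a tool inside its proof), the forcing extension asserted by Theorem \ref{thm.HLmbl} is $V[G_\kappa]$ itself. In particular, your final paragraph's transfer of HL from $V$ into your $V[G]$, and the attendant worry about hypothesis (1) surviving into the extension, is not how the argument should be structured: one never needs HL to hold in the $\mathrm{Add}(\kappa,\lambda)$-extension at all.
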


\begin{proof}
Let $\lgl\kappa_{\al}:\al<\kappa\rgl$  enumerate all the  strongly inaccessible cardinals  in $V$ below $\kappa$.
Let  $\bP_{\kappa}$ denote  the  $\kappa$-length reverse Easton support iteration of Add$(\kappa_{\al},\kappa_{\al}^{+d})$, and let $G_{\kappa}$ be $\bP_{\kappa}$-generic over $V$.
Then in $V[G_{\kappa}]$,
$\kappa$ is still measurable by a standard lifting of the embedding,  and
GCH holds at and above $\kappa$
since $|\bP_{\kappa}|=\kappa$.
Thus, in $V[G_{\kappa}]$, the partition relation $\kappa^{+d}\ra (\kappa)^d_{\kappa}$ holds.
Since $\kappa$ is measurable in $V[G_{\kappa}]$, the partition relations $\kappa\ra(\mu_1)^{d\cdot 2}_{\mu_2}$ holds for all pairs $\mu_1,\mu_2<\kappa$.
Let $\bQ$ denote Add$(\kappa,\kappa^{+d})$ in $V[G_\kappa]$, and 
let $H$ be $\bQ$-generic over $V[G_\kappa]$.
By an unpublished  result of Woodin (see for instance \cite{Friedman/Thompson08} for a proof)
$\kappa$ remains measurable in $V[G_{\kappa}][H]$, since $\kappa$ is $\kappa+d$-strong in $V$.
Thus, the hypotheses of Theorem \ref{thm.generalHL} 
 are satisfied in $V[G_{\kappa}]$,
and therefore HL$(d,\sigma,\kappa)$ holds in $V[G_\kappa]$.
\end{proof}

We conclude this section by pointing out that, in place of Add$(\kappa_{\al},\kappa_{\al}^{+d})$, one could use  a reverse Easton iteration of 
$\kappa_{\al}^{+d}$-products with ${{<}\kappa_\alpha}$-support of  $\kappa_{\al}$-Sacks forcing, as in  \cite{Friedman/Thompson08}, to achieve a model with $\kappa$ measurable and HL$(d,\sigma,\kappa)$ holding.
However, the homogeneity argument in
 the body of Theorem~\ref{thm.generalHL}
 would use $\kappa^+$ colors, and so we would 
need to start with a cardinal $\kappa$ which is $(\kappa+d+1)$-strong in the ground model
 to get an analogue of Theorem~\ref{thm.HLmbl}.
It should be noted though that for trees of height $\om$, Harrington's original forcing proof can also be modified to use Sacks forcing; the larger $\lambda$ needed to accommodate the homogeneity argument
 would need to satisfy $\lambda \rightarrow (\omega)^{2d}_{2^\omega}$.


\section{Closing Comments and Open Problems}\label{sec.6}

The following conjectures and questions are motivated by the results in the previous sections and their comparisons with results in \cite{Shelah91}, \cite{Hajnal/Komjath97} and  \cite{Dzamonja/Larson/MitchellQ09}.
We have found upper bounds for the consistency strengths of HL$(d,\sigma,\kappa)$ for $\kappa$ measurable and are interested in  the exact consistency strength.

\begin{conjecture}
For finite $d\ge 2$,
 $\kappa$ measurable and $\sigma<\kappa$,
 the consistency strength of  HL$(d,\sigma,\kappa)$   is
a $\kappa+d$-strong  cardinal.
\end{conjecture}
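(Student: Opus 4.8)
The plan is to prove an equiconsistency: the conjectured consistency-strength statement has two halves, and one of them is already in hand. Theorem~\ref{thm.HLmbl} establishes the \emph{upper bound} — the consistency of a $\kappa+d$-strong cardinal (Definition~\ref{def.gammastrong}) yields the consistency of ``$\kappa$ is measurable and HL$(d,\sigma,\kappa)$ holds for all $\sigma<\kappa$.'' What remains, and what the conjecture really asserts, is the matching \emph{lower bound}: from the consistency of ``$\kappa$ is measurable and HL$(d,\sigma,\kappa)$'' one must recover the consistency of a $\kappa+d$-strong cardinal. I would obtain this lower bound by a core model induction, extracting large-cardinal strength directly from the combinatorial hypothesis.

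First I would set up the anti-large-cardinal hypothesis: assume toward a contradiction that HL$(d,\sigma,\kappa)$ holds at a measurable $\kappa$ while no inner model contains a $\kappa+d$-strong cardinal. Under this hypothesis the Mitchell--Steel core model $K$ is available below the relevant extender-model threshold, together with its fine-structural apparatus: the weak covering lemma, global square sequences $\langle C_\xi\rangle$, approachability, and the non-reflection principles valid in extender models below a $\kappa+d$-strong cardinal. Weak covering ties the cardinal arithmetic of $V$ to that of $K$, and in particular controls the behavior of $\kappa^{+j}$ for $j\le d$, which is the data the upper-bound forcing was built around.

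The crux — and the step I expect to be the main obstacle — is converting this $K$-combinatorics into a \emph{bad coloring}. Using a square or non-reflecting object living at the level $\kappa^{+(d-1)}$ or $\kappa^{+d}$ in $K$ and transported to $V$ via covering, I would define a coloring $c:\bigotimes_{i<d}T_i\to\sigma$ of the level-product of $d$ regular $\kappa$-trees admitting no monochromatic strong-subtree solution. The coloring would record, level by level, membership of nodes in coherent square blocks, so that any candidate strong subtree $\langle T_i'\rangle$ witnessed by a cofinal $A\subseteq\kappa$ would be forced to meet incompatible blocks on cofinally many levels of $A$, defeating monochromaticity and contradicting HL$(d,\sigma,\kappa)$. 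This contradiction would show that HL$(d,\sigma,\kappa)$ at a measurable $\kappa$ entails an inner model with a $\kappa+d$-strong cardinal.

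The principal difficulty is twofold. There is no known combinatorial equivalent of ``$\kappa+d$-strong'' phrased in terms of tree colorings, so the bridge from square-type principles to the failure of HL is not routine; and, more delicately, the argument must be \emph{$d$-sensitive}, distinguishing $\kappa+d$-strong from $\kappa+(d-1)$-strong and from $\kappa+(d+1)$-strong. In the upper bound this granularity enters through the partition relation $\kappa^{+d}\to(\kappa)^d_\kappa$ (\Erdos--Rado, Theorem~\ref{thm.ER}) and Woodin's preservation of measurability after Add$(\kappa,\kappa^{+d})$; the lower bound must reproduce exactly this calibration, presumably by matching the arity $d$ of the coloring to the number of $K$-successor cardinals $\kappa^{+1},\dots,\kappa^{+d}$ that the covering argument pins down. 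Tuning the core model induction so that it halts at precisely the $\kappa+d$-strong level — rather than overshooting toward a strong cardinal or stopping short at a measurable — is the heart of the problem, and the reason this statement stands as a conjecture rather than a theorem.
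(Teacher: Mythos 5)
This statement is presented in the paper as a \emph{conjecture}: the paper contains no proof of it, and your proposal does not supply one either. The half you get right is the half the paper already proves: the upper bound is exactly Theorem~\ref{thm.HLmbl}, which derives the consistency of ``$\kappa$ measurable and HL$(d,\sigma,\kappa)$ for all $\sigma<\kappa$'' from a $\kappa+d$-strong cardinal. The lower bound, which is the entire content of the conjecture beyond what the paper establishes, is in your write-up a research program rather than an argument: you posit the core model $K$ below a $\kappa+d$-strong, invoke weak covering and square, and then say you ``would define'' a coloring with no monochromatic solution --- but the existence of such a coloring from anti-large-cardinal hypotheses is precisely the open problem, and you concede as much when you write that there is no known combinatorial equivalent of $\kappa+d$-strength and that the bridge is ``not routine.'' A proof cannot delegate its central step to an unconstructed object; what you have written is an accurate explanation of \emph{why} this is a conjecture, not a proof of it.

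There are also two concrete obstructions your sketch does not engage. First, a level mismatch: HL$(d,\sigma,\kappa)$ speaks only about trees of height $\kappa$ with levels of size ${<}\kappa$, so (with $\kappa$ inaccessible) all the relevant combinatorics lives at $\kappa$; the cardinal $\kappa^{+d}$ enters the paper only through the proof method --- the \Erdos--Rado instance $\kappa^{+d}\to(\kappa)^d_\kappa$ of Theorem~\ref{thm.ER} used in Theorem~\ref{thm.generalHL}, and Woodin's preservation of measurability after Add$(\kappa,\kappa^{+d})$ --- and not through the statement itself. A square or non-reflecting sequence at $\kappa^{+(d-1)}$ or $\kappa^{+d}$ in $K$ has no evident action on level products of $\kappa$-trees, so the proposed ``bad coloring'' lacks even a candidate mechanism. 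Second, the conjectured lower bound could simply be false: the paper's own open questions in Section~\ref{sec.6} (whether HL$(2,2,\kappa)$ is a ZFC theorem for some $\kappa>\omega$, and whether some large cardinal outright implies HL$(d,2,\kappa)$) leave open that HL$(d,\sigma,\kappa)$ at a measurable is equiconsistent with far less than a $\kappa+d$-strong cardinal --- or costs nothing at all --- and any genuine lower-bound argument must first rule this out. Relatedly, nothing in your sketch is actually $d$-sensitive; you name this as the heart of the problem but offer no device that would make a core-model induction halt at $\kappa+d$ rather than at $\kappa+(d-1)$ or $\kappa+(d+1)$.
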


If  the conjecture turns out to be true, then there must be a positive answer to the next question.

\begin{question}\label{op1}
Given $d \ge 1$,
is there  a model of ZFC
in which 
there is measurable cardinal $\kappa$ such that 
HL$(d,\sigma,\kappa)$ holds for all $\sigma<\kappa$, but HL$(d+1,\sigma,\kappa)$ fails for some $2 \le \sigma<\kappa$?
\end{question}

In Section 8 of \cite{Dzamonja/Larson/MitchellQ09}, it is mentioned that  a model satisfying the hypotheses of  Theorem 2.5 in \cite{Dzamonja/Larson/MitchellQ09} can be constructed,
assuming the existence of a measurable cardinal
$\kappa$
 such that $o(\kappa)=\kappa^{+2m+2}$.
We conjecture that the form of Halpern-\Lauchli\  in \cite{Shelah91} and
\cite{Dzamonja/Larson/MitchellQ09} is strictly stronger than the form HL$(d,\sigma,\kappa)$.

\begin{conjecture}
Let $d\ge 2$ be a finite number.
The consistency strength of 
HL$(d,\sigma,\kappa)$ for $\kappa$ measurable
 is strictly  less than the consistency strength of
 Theorem 2.5 in \cite{Dzamonja/Larson/MitchellQ09}
 for coloring $d$-sized antichains.
\end{conjecture}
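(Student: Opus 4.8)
The plan is to reduce the theorem to Theorem~\ref{thm.generalHL}, applied with $\delta=d$ and $\lambda=\kappa^{+d}$. To do so I must produce a model in which (i) both partition relations $\lambda\ra(\kappa)^d_\kappa$ and $\kappa\ra(\mu_1)^{d\cdot 2}_{\mu_2}$ for all $\mu_1,\mu_2<\kappa$ hold, and (ii) $\kappa$ remains measurable after adding $\lambda$ many Cohen subsets of $\kappa$. The two partition relations are cheap: under GCH the first is delivered by \Erdos--Rado (Theorem~\ref{thm.ER}), since $\kappa^{+d}\ra(\kappa^+)^d_\kappa$ and a homogeneous set of ordertype $\kappa^+$ contains one of ordertype $\kappa$; and the moment $\kappa$ is measurable the second is immediate from the Ramsey property of a normal measure. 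Hence the entire burden is to arrange a measurable $\kappa$, with GCH intact, whose measurability survives $\mbox{Add}(\kappa,\kappa^{+d})$.

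First I would perform a preparatory reverse Easton iteration $\bP_\kappa$ of length $\kappa$ that forces with $\mbox{Add}(\kappa_\al,\kappa_\al^{+d})$ at each inaccessible $\kappa_\al<\kappa$ and does nothing at other stages. Because $|\bP_\kappa|=\kappa$, this iteration preserves GCH at and above $\kappa$, and a standard master-condition argument lifts an embedding witnessing measurability, so $\kappa$ is still measurable in $V[G_\kappa]$. By the previous paragraph both partition relations then hold in $V[G_\kappa]$, which is the model in which I intend to apply Theorem~\ref{thm.generalHL}.

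Next I would force with $\bQ=\mbox{Add}(\kappa,\kappa^{+d})$ over $V[G_\kappa]$ and invoke Woodin's preservation theorem, in the form presented in \cite{Friedman/Thompson08}: since $\kappa$ is $\kappa+d$-strong in $V$, the preparation $\bP_\kappa$ allows one to lift a witnessing embedding $j:V\ra M$ (with $V_{\kappa+d}=M_{\kappa+d}$) through $\bP_\kappa\ast\bQ$, so that $\kappa$ remains measurable in $V[G_\kappa][H]$. This is precisely the hypothesis of Theorem~\ref{thm.generalHL} that $\kappa$ be measurable after adding $\lambda$ Cohen subsets. Since the conclusion $\mbox{HL}(\delta,\sigma,\kappa)$ of that theorem is asserted in the ground model of the Cohen forcing --- here $V[G_\kappa]$ --- applying it yields $\mbox{HL}(d,\sigma,\kappa)$ in $V[G_\kappa]$ for every $\sigma<\kappa$, and $\kappa$ is measurable there, which is the desired forcing extension.

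The crux, and the only genuinely delicate step, is the preservation of measurability under $\mbox{Add}(\kappa,\kappa^{+d})$: one must construct an $M$-generic for the image forcing above $\kappa$ and verify that the $\kappa^{+d}$ Cohen blocks are absorbed, which is exactly where the agreement $V_{\kappa+d}=M_{\kappa+d}$ is used and where $\kappa+d$-strength is seen to be sufficient. I would cite Woodin's result rather than reprove it. It is worth recording that this keeps the strength at $\kappa+d$ rather than $\kappa+2d$: a direct lift of Harrington's proof would color $2d$-tuples, but Lemma~\ref{key_combo_lemma} and the homogenization inside Theorem~\ref{thm.generalHL} use only $\delta\cdot 2$-sized sets with fewer than $\kappa$ colors, so no additional degree of strength is incurred.
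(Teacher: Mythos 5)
There is a genuine gap here, and it is structural: the statement you are addressing is a \emph{conjecture} in Section~\ref{sec.6} of the paper --- the paper offers no proof of it, and your argument does not prove it either. What you have written is, essentially step for step, the paper's proof of Theorem~\ref{thm.HLmbl}: the preparatory reverse Easton iteration of Add$(\kappa_\al,\kappa_\al^{+d})$ at inaccessibles below $\kappa$, the GCH/\Erdos--Rado computation (Theorem~\ref{thm.ER}) giving $\kappa^{+d}\ra(\kappa)^d_\kappa$, the partition relation $\kappa\ra(\mu_1)^{d\cdot 2}_{\mu_2}$ from measurability, and Woodin's preservation theorem as in \cite{Friedman/Thompson08} to keep $\kappa$ measurable after Add$(\kappa,\kappa^{+d})$, feeding all of this into Theorem~\ref{thm.generalHL}. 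That argument is correct, but what it establishes is only an \emph{upper bound}: Con(ZFC $+$ there is a $\kappa+d$-strong cardinal) implies Con(ZFC $+$ $\kappa$ is measurable $+$ HL$(d,\sigma,\kappa)$ for all $\sigma<\kappa$).

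The conjecture, however, asserts a strict inequality between two consistency strengths, and an upper bound on the left-hand side is only half of what is required. To conclude that the strength of HL$(d,\sigma,\kappa)$ at a measurable is \emph{strictly below} that of Theorem 2.5 of \cite{Dzamonja/Larson/MitchellQ09}, you would also need a lower bound on the strength of the latter that strictly exceeds a $\kappa+d$-strong cardinal, together with a lower-bound analysis showing the two sides do not coincide. No such bound is known: the paper records only that the Dzamonja--Larson--Mitchell theorem follows from $o(\kappa)=\kappa^{+2m+2}$ (again an upper bound) and that any $\kappa$ satisfying it must be weakly compact, which is far too weak to separate it from a $\kappa+d$-strong cardinal. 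Comparing upper bounds on both sides proves nothing about strictness --- for all your argument shows, both statements could have exactly the same consistency strength. This is precisely why the statement remains a conjecture; your reduction is a valid proof of Theorem~\ref{thm.HLmbl}, but it cannot settle the comparison being conjectured.
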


Further,  Dzamonja, Larson, and Mitchell point out that
Theorem 2.5 in \cite{Dzamonja/Larson/MitchellQ09}
is not a consequence of   any large cardinal assumption.
This follows from results of Hajnal and Komj\'{a}th 
in \cite{Hajnal/Komjath97}, a consequence of which is that there is a forcing of size $\aleph_1$  after which there is a coloring of the pairsets on ${}^{<\kappa}2$ for which there is no strong subtree homogeneous for the coloring.
However, that theorem  of Hajnal and Komj\'{a}th  does not seem to immediately provide a counterexample to HL$(d,2,\kappa)$, and so we ask the following.

\begin{question}
For $d\ge 2$, is there a large cardinal assumption on $\kappa$ which implies HL$(d,2,\kappa)$ holds?
\end{question}

In fact, the following is open:
\begin{question}
Is HL$(2,2,\kappa)$ for some $\kappa > \omega$ a theorem of $\zfc$?
\end{question}

In Section \ref{sec.3} we showed that all the variants considered in this paper are
 equivalent as long as $\kappa$ is weakly compact,
and showed that HL$(1,k,\kappa)$ holds when $\kappa$ is infinite and $k$ is finite.
In Section 8 of \cite{Dzamonja/Larson/MitchellQ09},
an argument is provided showing that any cardinal 
 $\kappa$ satisfying their   Theorem 2.5  for $m\ge 2$ must be weakly compact.
However, that argument does not  seem to apply to our situation
and so we ask the following.

\begin{question}
For $d \ge 2$, is the statement ``$\kappa\ge \aleph_1$ and  HL$(d,2,\kappa)$ holds" equiconsistent with some large cardinal?
\end{question}

Lastly, we proved Theorem \ref{thm.generalHL} for the general case of infinitely many trees with the optimism that a model of ZF satisfying the conditions can be found, and hence  ask the following.

\begin{question}
\label{q.ZFmodel}
Is there a model  of ZF
satisfying the partition relations stated in the hypotheses of Theorem \ref{thm.generalHL} 
for some infinite $\delta<\kappa$
such that after forcing with Add$(\kappa,\lambda)$,
 $\kappa$ is measurable?
\end{question}

\bibliographystyle{amsplain}
\bibliography{references}

\end{document}